\newcommand{\bs}{\boldsymbol}
\newcommand{\mrm}{\mathrm}
\def\cequiv{\raisebox{-1.5mm}{$\;\stackrel{\raisebox{-3.9mm}{=}}{{\sim}}\;$}}
\def\utau{\undertilde{\tau}}
\def\uphi{\undertilde{\varphi}}
\def\upsi{\undertilde{\psi}}
\def\ueta{\undertilde{\eta}}
\def\uzeta{\undertilde{\zeta}}
\def\ualpha{\undertilde{\alpha}}
\def\ubeta{\undertilde{\beta}}
\def\ugamma{\undertilde{\gamma}}
\def\uf{\undertilde{f}}
\def\ut{\undertilde{t}}
\def\uu{\undertilde{u}}
\def\uw{\undertilde{w}}
\def\ux{\undertilde{x}}
\def\rot{{\rm rot}}
\def\curl{{\rm curl}}
\def\dv{{\rm div}}
\def\uH{\undertilde{H}}
\def\vgm12{\bs{V}^{1+,2}_{\gamma,M}}
\newtheorem{theorem}{Theorem}
\newtheorem{remark}[theorem]{Remark}
\newtheorem{lemma}[theorem]{Lemma}
\newcounter{mnote}
\let\oldmarginpar\marginpar
\renewcommand\marginpar[1]{\-\oldmarginpar[\raggedleft\footnotesize #1]%
  {\raggedright\footnotesize #1}}
\begin{document}

\title{Stable mixed element schemes for plate models on multiply-connected domains}
\author{Shuo Zhang}
\address{LSEC, Institute of Computational Mathematics and Scientific/Engineering Computing, Academy of Mathematics and System Sciences, Chinese Academy of Sciences, Beijing 100190, People's Republic of China}
\email{szhang@lsec.cc.ac.cn}
\thanks{The author is supported partially by the National Natural Science Foundation of China with Grant No. 11471026 and National Centre for Mathematics and Interdisciplinary Sciences, Chinese Academy of Sciences.}

\subjclass[2000]{65N30, 74K20}

% 74K20 Plates
%35Qxx Equations of mathematical physics and other areas of application
%35Q60 PDEs in connection with optics and electromagnetic theory
%76Exx Hydrodynamic stability
%76E25 Stability and instability of magnetohydrodynamic and
%electrohydrodynamic flows
%76Wxx Magnetohydrodynamics and electrohydrodynamics 
%76W05 Magnetohydrodynamics and electrohydrodynamics

\keywords{Reissner-Mindlin plate; Kirchhoff plate;  mixed formulation; multiply-connected domain; finite element method; regular decomposition; Helmholtz decomposition}

\begin{abstract} 
In this paper, we study the mixed element schemes of the Reissner-Mindlin plate model and the Kirchhoff plate model in multiply-connected domains. By a regular decomposition of $H_0(\rot,\Omega)$ and a Helmholtz decomposition of its dual, we develop mixed formulations of the models which are equivalent to the primal ones respectively and which are uniformly stable. A framework of designing uniformly stable finite element schemes is presented, and a specific example is given.
\end{abstract}

\maketitle

%\tableofcontents

%
%
%
\section{Introduction}

In this paper, we study the Reissner-Mindlin model for moderately thick plates and the the Kirchhoff model for thin plates on multiply-connected domains. Among the many plate models in structural analysis, these two fall in the most frequently used ones. It is known that the stability of the Reissner-Mindlin model in its primal formulation is of a complicated representation, and utilising mixed formulations with auxiliary variables introduced is an important approach in the study of the model. This way, we will discuss the mixed element scheme of the Reissner-Mindlin model. The Kirchhoff model falls into the category of fourth order elliptic problem, and there have been many conforming and nonconforming finite elements for that. However, mixed element discretisation can bring in flexibility in implementation by finite element package and designing multilevel methods. Formally, the Kirchhoff model is the asymptotic limit of the Reissner-Mindlin model as the thickness tends to zero; we will also present mixed element schemes for the Kirchhoff model as the formal limit of that of the Reissner-Mindlin model.

There have been large literature on the mathematical analysis and numerical methods on Reissner-Mindlin model; we refer to \cite{Falk.R2008} for a brief review. The mathematical analysis and numerical solution of the model constructed on convex simply-connected polygons have been studied well. Since  \cite{Brezzi.F;Fortin.M1986,Brezzi.F;Fortin.M;Stenberg.R1991,Brezzi.F;Bathe.K;Fortin.M1989,Arnold.D;Falk.R1989}, procedures for developing stable and convergent finite element methods have been firmly established. The fast solution of the generated finite element system is discussed in \cite{Arnold.D;Falk.R;Winther.R1997}. For these achievements, mixed formulations played important roles \cite{Brezzi.F;Fortin.M1986,Brezzi.F;Fortin.M;Stenberg.R1991,Arnold.D;Falk.R1989}. Some other mixed formulation can be found in, e.g., \cite{Hughes.T;Franca.L1988,Lovadina.C1996,Taylor.R;Auricchio.F1993,Amara.M;Capatina-Papaghiuc.D;Chatti.A2002,Behrens.E;Guzman.J2011new}. 

In contrast, when the domain is multiply-connected (thus non convex), as known by the author, the model has not been discussed though it is practically applicable. An important difference that lies between simply- and multiply- connected domains is that a space of harmonic functions is contained in the Helmholtz decomposition of, e.g., $H_0(\rot,\Omega)$, and procedures developed in \cite{Brezzi.F;Fortin.M1986,Brezzi.F;Fortin.M;Stenberg.R1991,Brezzi.F;Bathe.K;Fortin.M1989,Arnold.D;Falk.R1989} can not trivially be repeated whereas the Helmholtz decomposition plays a crucial role. The influence of the existence of harmonic functions has been discussed in the context of Maxwell equation, for which we refer to, e.g., \cite{Brenner.S;Cui.J;Nan.Z;Sung.L2012,Pasciak.J;Zhao.J2002} for related discussion, while its influence on Reissner-Mindlin plate has not been discussed. Some investigation on the problem is carried on in the present paper. It is verified that $H_0(\rot,\Omega)=\nabla H^1_0(\Omega)+(H^1_0(\Omega))^2$ still holds on multiply-connected domains. Based on this observation, also to deal with the obstacle of extra harmonic functions, we suggest a new mixed formulation for the Reissner-Mindlin model, and prove its uniform stability. The new mixed formulation is different from the ones aforementioned even when restricted to simply-connected domains. Further, a framework of constructing finite element schemes based on the mixed formulation is presented. The error estimation in energy norm follows with respect to the regularity of the system. A specific example is given in the framework.

The remaining of the paper is organised as follows. In Section \ref{sec:sobspa}, we study the Sobolev space $H_0(\rot,\Omega)$, and construct a regular decomposition of $H_0(\rot,\Omega)$ and a Helmholtz decomposition of $H_0(\rot,\Omega)'$. In Section \ref{sec:mixform}, a uniformly stable mixed formulation of the Reissner-Mindlin plate model and a stable mixed formulation of the Kirchhoff plate model are constructed based on the structural properties presented in Section \ref{sec:mixform}. Then in Section \ref{sec:fem}, finite element discretizations of the models are discussed. Several general conditions are presented for selecting finite element spaces to form discretisations for thick and thin plate models, and a specific example that satisfies the conditions are given. Coherently, a primal scheme which is coincident with the Dur\'an-Liberman scheme \cite{Duran.R;Liberman.E1992} designed on simply-connected domains is derived by the new approach for multiply-connected domains. And finally, some concluding remarks are given in Section \ref{sec:conc}.

\section{Structure of Sobolev spaces on multiply-connected polygon}
\label{sec:sobspa}

\subsection{Preliminaries}

Through this paper, we use $\Omega$ for a multiply-connected polygonal domain. Specifically, let $\Omega_0\subset\mathbb{R}^2$ be a simply-connected polygon with boundary $\Gamma_0$, and $\Omega_j\subset\Omega_0$ be simply-connected polygons with boundary $\Gamma_j$, $j=1,\dots,J$, such that ${\rm dist}(\Gamma_i,\Gamma_j)>0$ for any $0\leqslant i\neq j\leqslant J$, and define $\Omega=\Omega_0\setminus(\cup_{j=1}^J\overline{\Omega}_j)$. Evidently, $\Omega$ is a bounded connected domain in $\mathbb{R}^2$ with Lipschitz boundary. Denote by $\Gamma$ the boundary of $\Omega$; then $\Gamma=\cup_{j=0}^J\Gamma_j$. In this paper, we consider the Reissner-Mindlin and the Kirchhoff plate model on $\Omega$.

We use $\nabla$, $\curl$, $\dv$ and $\rot$ for the gradient operator, curl operator, divergence operator and rot operator, respectively; $\curl$ and $\rot$ are perpendicular to $\nabla$ and $\dv$, respectively. As usual, we use $H^2(\Omega)$, $H^2_0(\Omega)$, $H^1(\Omega)$, $H^1_0(\Omega)$, $H(\rot,\Omega)$, $H_0(\rot,\Omega)$ and $L^2(\Omega)$ for certain Sobolev spaces, and specifically, denote  $\displaystyle L^2_0(\Omega):=\{w\in L^2(\Omega):\int_\Omega w dx=0\}$, $\undertilde{H}{}^1_0(\Omega):=(H^1_0(\Omega))^2$, $\mathring{H}_0(\rot,\Omega):=\{\utau\in H_0(\rot,\Omega):\rot\utau=0\}$, and $\mathring{\uH}{}^1_0(\Omega):=\{\upsi\in \uH{}^1_0(\Omega):\rot\upsi=0\}$. Denote by $(\mathring{\uH}{}^1_0(\Omega))^\perp$ the orthogonal complement of $\mathring{\uH}{}^1_0(\Omega)$ in $\uH{}^1_0(\Omega)$ with respect to the inner product $(\nabla\cdot,\nabla\cdot)$, and by $(\mathring{H}_0(\rot,\Omega))^\perp$ the orthogonal complement of $\mathring{H}_0(\rot,\Omega)$ in $H_0(\rot,\Omega)$ with respect to the inner product $(\cdot,\cdot)$ and simultaneously the inner product of $H_0(\rot,\Omega)$. With respect to the multiply-connectivity, define
\begin{itemize}
\item $H^1_C(\Omega):=\{w\in H^1(\Omega):w|_{\Gamma_0}=0,\ w|_{\Gamma_j}=\mbox{constant},\ 1\leqslant j\leqslant J\}$;
\item $\mathcal{H}^1_C(\Omega):=\{w\in H^1_C(\Omega):(\nabla w,\nabla s)=0,\ \forall\,s\in H^1_0(\Omega)\}$;
\item $H^2_C(\Omega):=\{w\in H^2(\Omega):w|_{\Gamma_0}=0,\ w|_{\Gamma_j}=\mbox{constant},\ 1\leqslant j\leqslant J,\ \frac{\partial w}{\partial\mathbf{n}}=0\ \mbox{on}\ \Gamma\}$;
\item $\mathcal{H}^2_C(\Omega):=\{w\in H^2_C(\Omega):(\nabla^2 w,\nabla^2 s)=0,\ \forall\,s\in H^2_0(\Omega)\}$.
\end{itemize}
By elliptic regularity theory\cite{Nazarov.S;Plamenevsky.B1994,Grisvard.P1985,Dauge.M2006,Brenner.S;Cui.J;Nan.Z;Sung.L2012}, $\psi\in H^{3/2+\delta_0}(\Omega)$ for some $\delta_0>0$ if $\psi\in \mathcal{H}^1_C(\Omega)$. We use $``\undertilde{~}"$ for vector valued quantities in the present paper. We use $(\cdot,\cdot)$ for $L^2$ inner product and $\langle\cdot,\cdot\rangle$ for the duality between a space and its dual. Without ambiguity, we use the same notation $\langle\cdot,\cdot\rangle$ for different dualities, and it can occasionally be treated as $L^2$ inner product for certain functions. And finally, $\lesssim$, $\gtrsim$, and $\cequiv$ respectively denote $\leqslant$, $\geqslant$, and $=$ up to a constant. The hidden constants depend on the domain, and, when triangulation is involved, they also depend on the shape-regularity of the triangulation, but they do not depend on $h$ or any other mesh parameter.

Recall that $\rot$ is the rotation of $\dv$. By virtue of Corollaries 3.1 and 3.2 and then Corollary 2.4 of \cite{Girault.V;Raviart.P1986}, we have the lemma below.
\begin{lemma}\label{lem:isorot}
\begin{enumerate}
\item
$\mathring{H}_0(\rot,\Omega)=\nabla H^1_C(\Omega)$; $\mathring{\uH}{}^1_0(\Omega)=\nabla H^2_C(\Omega)$.
\item $\rot$ is an isomorphism from $(\nabla H^2_C(\Omega))^\perp$ onto $L^2_0(\Omega)$.
\end{enumerate}
\end{lemma}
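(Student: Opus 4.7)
My plan is to establish each identity in part (1) by proving the two inclusions, using the Poincar\'e lemma adapted to the multiply-connected topology of $\Omega$, and then to derive part (2) from part (1) together with the surjectivity of $\rot$ onto $L^2_0(\Omega)$, which is a standard rotated version of the inf-sup for $\dv$ on Lipschitz domains.

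For the first identity $\mathring{H}_0(\rot,\Omega) = \nabla H^1_C(\Omega)$, I would begin with the inclusion ``$\supseteq$'', which is immediate: if $w \in H^1_C(\Omega)$, then $\nabla w \in L^2(\Omega)^2$ satisfies $\rot \nabla w = 0$, and the tangential trace of $\nabla w$ on each component $\Gamma_j$ is the tangential derivative of $w|_{\Gamma_j}$, which vanishes because $w$ is constant on $\Gamma_j$. For the opposite inclusion, I would take $\utau \in \mathring{H}_0(\rot,\Omega)$; since $\rot\utau = 0$, a local potential $w$ with $\nabla w = \utau$ exists, and the only obstruction to a global potential is the set of periods $\oint_{\gamma_j} \utau\cdot\mathbf{t}$ around the holes $\Omega_j$. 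Deforming each $\gamma_j$ onto $\Gamma_j$ inside $\Omega$ (legal because $\rot\utau=0$) and using the vanishing tangential trace $\utau\cdot\mathbf{t}|_{\Gamma_j}=0$ kills every period, so a global $w \in H^1(\Omega)$ exists. The zero tangential trace of $\nabla w$ then forces $w$ to be constant on each $\Gamma_j$, and normalising to zero on $\Gamma_0$ yields $w \in H^1_C(\Omega)$.

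For $\mathring{\uH}{}^1_0(\Omega) = \nabla H^2_C(\Omega)$ the argument is parallel, with two enhancements: the hypothesis $\upsi|_\Gamma = 0$ gives both a vanishing tangential and a vanishing normal trace of the gradient field, which together yield $w$ constant on each $\Gamma_j$ together with $\partial w/\partial\mathbf{n}=0$ on $\Gamma$; and since $\nabla w = \upsi$ is componentwise in $H^1_0$, the potential $w$ is in $H^2(\Omega)$, so $w \in H^2_C(\Omega)$. For part (2), I would first observe that $\rot:\uH{}^1_0(\Omega) \to L^2(\Omega)$ has image in $L^2_0(\Omega)$ because $\int_\Omega \rot\upsi = 0$ for $\upsi$ with vanishing trace, and is surjective onto $L^2_0(\Omega)$ by the standard inf-sup argument. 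Its kernel is exactly $\mathring{\uH}{}^1_0(\Omega) = \nabla H^2_C(\Omega)$ by what was just proved, so restricting $\rot$ to the $(\nabla\cdot,\nabla\cdot)$-orthogonal complement $(\nabla H^2_C(\Omega))^\perp$ produces a continuous bijection onto $L^2_0(\Omega)$, and the open mapping theorem upgrades this to an isomorphism.

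The main obstacle I anticipate is the period-killing step: one must verify that the class of $\utau$ against every non-contractible loop vanishes, which is precisely where the topology of multiply-connected $\Omega$ would otherwise admit nontrivial harmonic fields. What makes the argument succeed is that the tangential trace vanishes on every \emph{interior} boundary $\Gamma_j$ with $j\geq 1$, not merely on the outer boundary $\Gamma_0$; this extra information is exactly what defeats the harmonic contributions and is the reason the spaces $H^1_C(\Omega)$ and $H^2_C(\Omega)$, with their constant-on-each-hole constraints, appear naturally on the right-hand side.
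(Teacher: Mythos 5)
Your argument is correct, but it is worth noting that the paper does not actually prove this lemma: it simply cites Corollaries~3.1, 3.2 and 2.4 of Girault--Raviart, which are exactly the statements you reconstruct. What you have written is therefore a self-contained version of the cited results rather than a different theorem-level strategy. The two inclusions ``$\supseteq$'' are immediate as you say; for ``$\subseteq$'' your period-killing step is the right mechanism, and your closing observation is the correct one --- it is precisely the vanishing of the tangential trace on the \emph{interior} components $\Gamma_j$, $j\geqslant 1$, that annihilates the circulations which would otherwise produce harmonic fields outside $\nabla H^1_C(\Omega)$. Two steps deserve to be flagged as relying on standard but nontrivial facts: (i) the deformation of a loop $\gamma_j$ onto $\Gamma_j$ and the identification of the period with $\oint_{\Gamma_j}\utau\cdot\mathbf{t}$ must be done through the weak Green formula for $H(\rot,\Omega)$ fields (the tangential trace of $\utau$ lives only in $H^{-1/2}(\Gamma)$), and the existence of a single-valued $H^1$ potential once all periods vanish is itself the content of Girault--Raviart's Theorem~2.9/Corollary~3.1 rather than the classical smooth Poincar\'e lemma; (ii) the surjectivity of $\rot:\uH{}^1_0(\Omega)\to L^2_0(\Omega)$ is the rotated $\dv$ inf-sup on a bounded connected Lipschitz domain, which indeed is insensitive to multiple connectivity --- this is the cited Corollary~2.4. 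Granting these standard ingredients, your proof is complete, and the open-mapping upgrade in part~(2) is exactly how the quantitative bound $\|\uphi\|_{1,\Omega}\leqslant C\|\rot\uphi\|_{0,\Omega}$ on $(\nabla H^2_C(\Omega))^\perp$, used later in Theorem~\ref{thm:sdhrot}, is obtained.
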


The Friedrichs' inequality below follows from Lemma \ref{lem:isorot}.
\begin{lemma}\label{lem:fine}
There exists a constant $C$, such that $\|\utau\|_{\rot,\Omega}\leqslant C\|\rot\utau\|_{0,\Omega}$ for $\utau\in (\nabla H^1_C(\Omega))^\perp$.
\end{lemma}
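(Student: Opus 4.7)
The plan is to reduce the inequality to Lemma \ref{lem:isorot} by decomposing $\utau = \upsi + \nabla p$, with $\upsi\in\uH^1_0(\Omega)$ carrying the rot and $\nabla p$ irrotational, and then exploiting the defining orthogonality of $(\nabla H^1_C(\Omega))^\perp$ to control the gradient piece. Since
\begin{equation*}
\|\utau\|_{\rot,\Omega}^2 = \|\utau\|_{0,\Omega}^2+\|\rot\utau\|_{0,\Omega}^2,
\end{equation*}
it suffices to bound $\|\utau\|_{0,\Omega}$ by $\|\rot\utau\|_{0,\Omega}$.

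First I would observe that $\rot\utau\in L^2_0(\Omega)$: Green's formula gives $\int_\Omega\rot\utau = \int_\Gamma\utau\cdot\mathbf{t}$, and the boundary integral vanishes because $\utau\in H_0(\rot,\Omega)$. Next, by Lemma \ref{lem:isorot}(2) there exists $\upsi\in(\nabla H^2_C(\Omega))^\perp\subset\uH^1_0(\Omega)$ with $\rot\upsi=\rot\utau$ and $\|\upsi\|_{1,\Omega}\lesssim\|\rot\utau\|_{0,\Omega}$. Since $\uH^1_0(\Omega)\subset H_0(\rot,\Omega)$, the difference $\utau-\upsi$ belongs to $H_0(\rot,\Omega)$ and has vanishing rot, so by Lemma \ref{lem:isorot}(1) it equals $\nabla p$ for some $p\in H^1_C(\Omega)$. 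The orthogonality $(\utau,\nabla p)=0$ built into the definition of $(\nabla H^1_C(\Omega))^\perp$, combined with $\utau=\upsi+\nabla p$, then yields
\begin{equation*}
\|\nabla p\|_{0,\Omega}^2 = -(\upsi,\nabla p) \leq \|\upsi\|_{0,\Omega}\|\nabla p\|_{0,\Omega},
\end{equation*}
hence $\|\nabla p\|_{0,\Omega}\leq\|\upsi\|_{0,\Omega}\lesssim\|\rot\utau\|_{0,\Omega}$. A triangle inequality finishes the bound on $\|\utau\|_{0,\Omega}$.

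The genuine obstacle is already absorbed in Lemma \ref{lem:isorot}: the surjectivity of $\rot$ from $(\nabla H^2_C(\Omega))^\perp$ onto $L^2_0(\Omega)$ is what handles the multiply-connected topology (the presence of the locally-constant harmonic fields from $\mathcal{H}^1_C(\Omega)$ is precisely what forces the use of $H^1_C, H^2_C$ rather than $H^1_0, H^2_0$ in the statement). Once that is granted, the present lemma is a routine orthogonality argument. As a sanity check, one could alternatively run a contradiction-and-compactness proof using the Weber-type compact embedding $H_0(\rot,\Omega)\cap H(\dv,\Omega)\hookrightarrow\hookrightarrow L^2(\Omega)^2$, noting that $(\utau,\nabla q)=0$ for all $q\in H^1_0(\Omega)\subset H^1_C(\Omega)$ forces $\dv\utau=0$ weakly; however, the constructive route through Lemma \ref{lem:isorot} gives a direct, quantitative argument with no appeal to compactness.
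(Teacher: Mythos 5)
Your argument is correct and is essentially the derivation the paper leaves implicit: the paper offers no proof beyond the remark that the inequality ``follows from Lemma \ref{lem:isorot}'', and your route --- lifting $\rot\utau\in L^2_0(\Omega)$ through the isomorphism of Lemma \ref{lem:isorot}(2), identifying the irrotational remainder as $\nabla p$ with $p\in H^1_C(\Omega)$ via Lemma \ref{lem:isorot}(1), and killing $\|\nabla p\|_{0,\Omega}$ with the defining orthogonality of $(\nabla H^1_C(\Omega))^\perp$ --- is precisely the natural way to fill that gap, and matches how the paper itself uses Lemma \ref{lem:isorot}(2) later (e.g.\ in the proof of Theorem \ref{thm:sdhrot}). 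I see no gaps.
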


\subsection{Regular decomposition of $H_0(\rot,\Omega)$}
First of all, the spaces $\mathcal{H}^1_C(\Omega)$ and $\mathcal{H}^2_C(\Omega)$ have the same dimension $J$. Any two norms on each of them are equivalent.
\begin{lemma}\label{lem:findim}
Let $\|\cdot\|_A$ and $\|\cdot\|_B$ be two norms defined on $\mathcal{H}^1_C$ and $\mathcal{H}^2_C$, respectively. There exist two constants $C_s$ and $C_b$, such that, if $w_i\in\mathcal{H}^i_C$ and $w_1|_\Gamma=w_2|_\Gamma$, then $C_s\|w_1\|_{A,\Omega}\leqslant \|w_2\|_{B,\Omega}\leqslant C_b\|w_1\|_{A,\Omega}$.
\end{lemma}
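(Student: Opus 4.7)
The plan is to recognise this as a purely finite-dimensional statement once $\mathcal{H}^1_C(\Omega)$ and $\mathcal{H}^2_C(\Omega)$ are identified with $\mathbb{R}^J$ through their boundary traces on $\Gamma_1,\ldots,\Gamma_J$. Since the paper has already asserted that both spaces have dimension $J$, the bulk of the work is to justify that the trace maps give linear isomorphisms onto $\mathbb{R}^J$, after which everything reduces to norm-equivalence in a $J$-dimensional normed space.

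First I would define the boundary-value maps $T_1:\mathcal{H}^1_C(\Omega)\to\mathbb{R}^J$ by $T_1 w:=(w|_{\Gamma_1},\ldots,w|_{\Gamma_J})$ and $T_2:\mathcal{H}^2_C(\Omega)\to\mathbb{R}^J$ by the same formula, and show each is a linear isomorphism. Injectivity is immediate: if $T_i w=0$ then $w\in H^1_0(\Omega)$ (respectively $H^2_0(\Omega)$), and the orthogonality condition against all of $H^1_0(\Omega)$ (resp.\ $H^2_0(\Omega)$) then forces $w\equiv 0$. Surjectivity: given $\mathbf{c}=(c_1,\ldots,c_J)\in\mathbb{R}^J$, pick any $H^1$ (resp.\ $H^2_C$) lifting $\tilde w$ of these boundary data, and subtract from it its $(\nabla\cdot,\nabla\cdot)$-orthogonal projection onto $H^1_0(\Omega)$ (resp.\ the $(\nabla^2\cdot,\nabla^2\cdot)$-orthogonal projection onto $H^2_0(\Omega)$); the resulting element lies in the relevant $\mathcal{H}^i_C$ and still has trace $\mathbf{c}$. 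Existence and uniqueness of these projections is a routine Lax--Milgram argument, the biharmonic case using Poincar\'e on $H^2_0(\Omega)$ to ensure coercivity.

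Next I would assemble the map $\Phi:=T_2^{-1}\circ T_1:\mathcal{H}^1_C(\Omega)\to\mathcal{H}^2_C(\Omega)$, which is precisely the assignment $w_1\mapsto w_2$ described in the statement (match boundary values pointwise; since both $w_i|_{\Gamma_0}=0$ and both are constant on each $\Gamma_j$, matching the $J$-tuple of trace constants is equivalent to matching on $\Gamma$). Then $\Phi$ is a linear isomorphism between two finite-dimensional normed spaces of dimension $J$. Any linear map between finite-dimensional normed spaces is bounded, and its inverse is bounded as well, so there are constants $C_s,C_b>0$ with $C_s\|w_1\|_A\leq \|\Phi w_1\|_B\leq C_b\|w_1\|_A$ for every $w_1\in\mathcal{H}^1_C(\Omega)$, which is the claimed estimate.

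The main potential obstacle is the $\mathcal{H}^2_C(\Omega)$ half of the dimensionality statement: one must confirm that for arbitrary constants $c_1,\ldots,c_J$ there actually exists an element of $\mathcal{H}^2_C(\Omega)$ realising those values while satisfying $\partial_\mathbf{n} w=0$ on all of $\Gamma$. Constructing a smooth lifting with this Neumann condition is mildly delicate but elementary on a polygon (e.g.\ take $w$ locally constant near each $\Gamma_j$ and interpolate smoothly between the $\Gamma_j$'s using a partition of unity supported away from $\Gamma$). Once the lifting is in hand, the biharmonic projection completes the argument, and everything else is finite-dimensional linear algebra.
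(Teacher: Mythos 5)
Your proposal is correct and follows essentially the same route as the paper: identify $\mathcal{H}^1_C(\Omega)$ and $\mathcal{H}^2_C(\Omega)$ with $\mathbb{R}^J$ via the tuple of boundary constants and invoke equivalence of norms on a finite-dimensional space. You additionally justify the surjectivity and injectivity of the trace identifications (the existence of the lifting with vanishing normal derivative, and the Lax--Milgram projections), details the paper's proof takes for granted when it simply asserts that the maps $\Upsilon_i$ are well defined with $\Upsilon_i\undertilde{\upsilon}=0$ iff $\undertilde{\upsilon}=0$.
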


\begin{proof}
For $i=1,2$, define $\Upsilon_i$ from $\mathbb{R}^J$ to $\mathcal{H}^i_C$ by
$$
\Upsilon_i\undertilde{\upsilon}\in\mathcal{H}^i_C,\ \ \Upsilon_i\undertilde{\upsilon}|_{\Gamma_j}=(\undertilde{\upsilon})(j),\ \ j=1,\dots,J,\ \ \undertilde{\upsilon}\in\mathbb{R}^J.
$$
Moreover, $\Upsilon_i\undertilde{\upsilon}=0$ iff $\undertilde{\upsilon}=\undertilde{0}$, $i=1,2$. Therefore, two norms on $\mathbb{R}^J$ can be defined by
$$
\|\undertilde{\upsilon}\|_{*}:=\|\Upsilon_1\undertilde{\upsilon}\|_{A,\Omega}, \ \ \mbox{and}\ \ \|\undertilde{\upsilon}\|_{**}:=\|\Upsilon_2\undertilde{\upsilon}\|_{B,\Omega}.
$$
As $\mathbb{R}^J$ is of finite dimensional, $\|\undertilde{\upsilon}\|_*$ and $\|\undertilde{\upsilon}\|_{**}$ are equivalent. This completes the proof. 
\end{proof}

\begin{theorem}\label{thm:sdhrot}(Stable regular decomposition of $H_0(\rot,\Omega)$) Given $\utau\in H_0(\rot,\Omega)$, there exist $w_\tau\in H^1_0(\Omega)$ and $\uphi{}_\tau\in\uH{}^1_0(\Omega)$, such that $\|w_\tau\|_{1,\Omega}+\|\uphi{}_\tau\|_{1,\Omega}\leqslant C\|\utau\|_{\rot,\Omega}$, and $\nabla w_\tau+\uphi{}_\tau=\utau$.
\end{theorem}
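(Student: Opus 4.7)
The plan is to split $\utau$ into a piece carrying $\rot\utau$ and a rot-free remainder, then to handle the harmonic-type obstruction to placing the rot-free remainder into $\nabla H^1_0(\Omega)$ by trading it against a correction built out of $\mathcal{H}^2_C(\Omega)$. The multiply-connected geometry is absorbed precisely in this trade.

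First I would fix the rotation. Since $\utau\in H_0(\rot,\Omega)$, the vanishing tangential trace gives $\int_\Omega\rot\utau=0$, so $\rot\utau\in L^2_0(\Omega)$. By part (2) of Lemma \ref{lem:isorot}, $\rot$ is an isomorphism from $(\nabla H^2_C(\Omega))^\perp=(\mathring{\uH}{}^1_0(\Omega))^\perp$ onto $L^2_0(\Omega)$, so I can pick the unique $\uphi_1\in\uH{}^1_0(\Omega)$ with $\rot\uphi_1=\rot\utau$ and $\|\uphi_1\|_{1,\Omega}\lesssim\|\rot\utau\|_{0,\Omega}$. Then $\utau-\uphi_1\in\mathring{H}_0(\rot,\Omega)=\nabla H^1_C(\Omega)$ by Lemma \ref{lem:isorot}(1), so I can write $\utau-\uphi_1=\nabla s$ with $s\in H^1_C(\Omega)$ (the condition $s|_{\Gamma_0}=0$ fixes the additive constant), and a Poincar\'e-type inequality yields $\|s\|_{1,\Omega}\lesssim\|\utau\|_{\rot,\Omega}$.

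Next I would decompose $s=s_0+s_h$ orthogonally in $H^1_C(\Omega)$ with respect to $(\nabla\cdot,\nabla\cdot)$, giving $s_0\in H^1_0(\Omega)$ and $s_h\in\mathcal{H}^1_C(\Omega)$ with $\|s_0\|_{1,\Omega}+\|s_h\|_{1,\Omega}\lesssim\|s\|_{1,\Omega}$. The obstruction to simply declaring $w_\tau=s$ and $\uphi{}_\tau=\uphi_1$ is that $\nabla s_h$ is only in $L^2$, not in $\uH{}^1_0$. To fix this, using that $\mathcal{H}^1_C(\Omega)$ and $\mathcal{H}^2_C(\Omega)$ are both $J$-dimensional and parametrised by their tuples of boundary values on $\Gamma_1,\dots,\Gamma_J$, I would pick $\widetilde s_h\in\mathcal{H}^2_C(\Omega)$ with $\widetilde s_h|_{\Gamma_j}=s_h|_{\Gamma_j}$ for each $j$. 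Lemma \ref{lem:findim} then supplies $\|\widetilde s_h\|_{2,\Omega}\lesssim\|s_h\|_{1,\Omega}$. Because $\widetilde s_h\in H^2_C(\Omega)$, both the tangential derivative (which is zero since $\widetilde s_h$ is constant on each component of $\Gamma$) and the normal derivative of $\widetilde s_h$ vanish on $\Gamma$, so $\nabla\widetilde s_h\in\uH{}^1_0(\Omega)$; and since $s_h$ and $\widetilde s_h$ have identical boundary values, $s_h-\widetilde s_h\in H^1_0(\Omega)$.

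Finally I would set $w_\tau:=s_0+(s_h-\widetilde s_h)\in H^1_0(\Omega)$ and $\uphi{}_\tau:=\uphi_1+\nabla\widetilde s_h\in\uH{}^1_0(\Omega)$. The telescoping identity $\nabla w_\tau+\uphi{}_\tau=\nabla s_0+\nabla s_h+\uphi_1=\nabla s+\uphi_1=\utau$ gives the decomposition, and chaining the estimates above delivers $\|w_\tau\|_{1,\Omega}+\|\uphi{}_\tau\|_{1,\Omega}\lesssim\|\utau\|_{\rot,\Omega}$. The delicate step is the passage $s_h\mapsto\widetilde s_h$: this is where multiple-connectivity really enters, since the $\mathcal{H}^1_C$ piece has to be exchanged for its $\mathcal{H}^2_C$ counterpart with matched boundary data so that its gradient gains the extra regularity needed to land in $\uH{}^1_0$, and uniform control then rides on the finite-dimensional norm equivalence of Lemma \ref{lem:findim}.
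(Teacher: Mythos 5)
Your proposal is correct and follows essentially the same route as the paper's own proof: fix the rotation via the isomorphism of Lemma \ref{lem:isorot}(2), write the rot-free remainder as $\nabla s$ with $s\in H^1_C(\Omega)$, split off the harmonic part $s_h\in\mathcal{H}^1_C(\Omega)$, and exchange it for a matched-boundary-value element of $\mathcal{H}^2_C(\Omega)$ controlled by the finite-dimensional equivalence of Lemma \ref{lem:findim}. Your explicit observation that $\rot\utau\in L^2_0(\Omega)$, and your direct $H^2$-bound on $\widetilde s_h$ in place of the paper's Pythagorean identity for $\nabla w_3$, are only cosmetic variations.
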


\begin{proof}
Given $\utau\in H_0(\rot,\Omega)$, by Lemma \ref{lem:isorot}, there exists a unique $\uphi\in(\nabla H^2_C(\Omega))^\perp$, such that $\rot\uphi=\rot\utau$ and $\|\uphi\|_{1,\Omega}\leqslant C\|\rot\utau\|_{0,\Omega}$. Then there exists a $w\in H^1_C(\Omega)$, such that $\nabla w=\utau-\uphi$. Evidently, $\|w\|_{1,\Omega}\leqslant C\|\utau\|_{\rot,\Omega}$. Decompose $w=w_1+w_2$ with $w_1\in H^1_0(\Omega)$ and $w_2\in\mathcal{H}^1_C(\Omega)$, then $\|\nabla w_1\|_{0,\Omega}+\|\nabla w_2\|_{0,\Omega}\leqslant 2\|\nabla w\|_{0,\Omega}.$ Further, choose $w_3\in\mathcal{H}^2_C(\Omega)$ such that $w_3|_\Gamma=w_2|_\Gamma$, then $w_2=w_3+w_4$ with $w_4\in H^1_0(\Omega)$. Direct calculation leads to that $(\nabla w_3,\nabla w_3)=(\nabla w_2,\nabla w_2)+(\nabla w_4,\nabla w_4)$. Thus $\|\nabla w_4\|_{0,\Omega}\leqslant\|\nabla w_3\|_{0,\Omega}\leqslant C\|\nabla w_2\|_{0,\Omega}$ by Lemma \ref{lem:findim}. Now we arrive at the decomposition
$$
\utau=\nabla w+\uphi=\nabla (w_1+w_4)+(\nabla w_3+\uphi),
$$
where $w_1+w_4\in H^1_0(\Omega)$, and $\|w_1+w_4\|_{1,\Omega}\leqslant \|w_1\|_{1,\Omega}+\|w_4\|_{1,\Omega}\leqslant \|w_1\|_{1,\Omega}+C\|w_2\|_{1,\Omega}\leqslant C\|\utau\|_{\rot,\Omega}$
and
$\nabla w_3+\uphi\in\uH{}^1_0(\Omega)$, and $\|\nabla w_3+\uphi\|_{1,\Omega}\leqslant \|\nabla w_3\|_{1,\Omega}+\|\uphi\|_{1,\Omega}\leqslant C\|\nabla w_3\|_{0,\Omega}+\|\uphi\|_{1,\Omega}\leqslant C\|\utau\|_{\rot,\Omega}$, where we have used Lemma \ref{lem:findim} again. Taking $w_\tau:=w_1+w_4$ and $\uphi{}_\tau:=\nabla w_3+\uphi$ completes the proof. 
\end{proof}
\begin{remark}
Theorem \ref{thm:sdhrot} states actually 
\begin{equation}\label{eq:sdhrot}
H_0(\rot,\Omega)=\nabla H^1_0(\Omega)+\uH{}^1_0(\Omega).
\end{equation}
As evidently $\displaystyle\|\utau\|_{\rot,\Omega}\leqslant C\inf_{\substack{w\in H^1_0(\Omega),\uphi\in\uH{}^1_0(\Omega), \utau=\nabla w+\uphi}}\|w\|_{1,\Omega}+\|\uphi\|_{1,\Omega}$, the equivalence \eqref{eq:sdhrot} can be proved by the open mapping theorem. Here we present a constructive proof instead. By Lemma \ref{lem:isorot}, another stable decomposition $H_0(\rot,\Omega)=\nabla H^1_C(\Omega)+\uH{}^1_0(\Omega)$ can be derived directly. Similar decomposition can be found discussed in, e.g., \cite{Pasciak.J;Zhao.J2002}.
\end{remark}

\subsection{Helmholtz decomposition of $H_0(\rot,\Omega)'$}

Define 
$$
H^{-1}(\dv,\Omega):=\{\ueta\in (\uH{}^1_0(\Omega))':\dv\ueta\in (H^1_0(\Omega))'\}.
$$
By Theorem \ref{thm:sdhrot}, 
$$
H^{-1}(\dv,\Omega)=(H_0(\rot,\Omega))'.
$$

\begin{theorem}\label{thm:hdh-1div}(Helmholtz decomposition of $H_0(\rot,\Omega)'$) The Helmholtz decomposition holds
\begin{equation}
H^{-1}(\dv,\Omega)=\nabla H^1_C(\Omega)+\curl L^2_0(\Omega).
\end{equation}
Namely, given $\ueta\in H^{-1}(\dv,\Omega)$, there exists uniquely a $w_\eta\in H^1_C(\Omega)$ and $p\in L^2_0(\Omega)$, such that 
$\ueta=\nabla w_\eta+\curl p,$ and moreover, $\|\ueta\|_{H^{-1}(\dv,\Omega)}\cequiv\|w_\eta\|_{1,\Omega}+\|p\|_{0,\Omega}.$
%\begin{equation}
%H^{-1}(\dv,\Omega)=\nabla H^1_C(\Omega)\oplus\curl L^2_0(\Omega).
%\end{equation}
\end{theorem}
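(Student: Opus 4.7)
The plan is to construct $w_\eta$ and $p$ by separate variational problems on the two target components, then verify that $\nabla w_\eta + \curl p = \ueta$ in the duality $(H_0(\rot,\Omega))' = H^{-1}(\dv,\Omega)$. For $w_\eta$: since $\nabla H^1_C(\Omega) = \mathring{H}_0(\rot,\Omega) \subset H_0(\rot,\Omega)$ with $\|\nabla s\|_{\rot,\Omega} = \|\nabla s\|_{0,\Omega}$, and since functions in $H^1_C(\Omega)$ vanish on $\Gamma_0$ so Poincar\'e makes $(\nabla\cdot,\nabla\cdot)$ an equivalent inner product there, Lax--Milgram yields a unique $w_\eta \in H^1_C(\Omega)$ with
\begin{equation*}
(\nabla w_\eta, \nabla s) = \langle \ueta, \nabla s\rangle, \quad \forall s \in H^1_C(\Omega),
\end{equation*}
and $\|w_\eta\|_{1,\Omega} \lesssim \|\ueta\|_{H^{-1}(\dv,\Omega)}$.

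Next I would set $\ueta_1 := \ueta - \nabla w_\eta$, which annihilates $\nabla H^1_C(\Omega)$ by construction, and restrict $\ueta_1$ to $(\nabla H^1_C(\Omega))^\perp$ (orthogonal complement taken inside $H_0(\rot,\Omega)$). The pivotal intermediate step is to show that $\rot$ is an isomorphism from $(\nabla H^1_C(\Omega))^\perp$ onto $L^2_0(\Omega)$: Lemma \ref{lem:fine} gives injectivity with bound, while surjectivity follows by taking $q \in L^2_0(\Omega)$, using Lemma \ref{lem:isorot}(2) to produce $\uphi \in (\nabla H^2_C(\Omega))^\perp \subset \uH{}^1_0(\Omega) \subset H_0(\rot,\Omega)$ with $\rot \uphi = q$, and then orthogonally projecting $\uphi$ onto $(\nabla H^1_C(\Omega))^\perp$ in $H_0(\rot,\Omega)$ (which preserves $\rot$ because the kernel of the projection is $\nabla H^1_C(\Omega) = \mathring{H}_0(\rot,\Omega)$). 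With this isomorphism, I can define $p \in L^2_0(\Omega)$ by
\begin{equation*}
\langle \ueta_1, \utau\rangle = (p, \rot \utau), \quad \forall \utau \in (\nabla H^1_C(\Omega))^\perp,
\end{equation*}
with $\|p\|_{0,\Omega} \lesssim \|\ueta_1\|_{H^{-1}(\dv,\Omega)} \lesssim \|\ueta\|_{H^{-1}(\dv,\Omega)}$.

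This identity extends to all $\utau \in H_0(\rot,\Omega)$ by splitting $\utau = \nabla s + \utau^\perp$ with $s \in H^1_C(\Omega)$ and $\utau^\perp \in (\nabla H^1_C(\Omega))^\perp$, and observing that both sides agree on each summand since $\ueta_1$ annihilates the first and $\rot \nabla s = 0$; hence $\ueta_1 = \curl p$ in $H^{-1}(\dv,\Omega)$ and $\ueta = \nabla w_\eta + \curl p$ as required. The reverse bound $\|\nabla w_\eta + \curl p\|_{H^{-1}(\dv,\Omega)} \lesssim \|w_\eta\|_{1,\Omega} + \|p\|_{0,\Omega}$ is immediate from the defining actions of $\nabla w_\eta$ and $\curl p$ on $H_0(\rot,\Omega)$. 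Uniqueness follows by testing $\nabla w + \curl p = 0$ against $\nabla w$ itself to kill $w$ (using Poincar\'e on $H^1_C(\Omega)$), then invoking surjectivity of $\rot : H_0(\rot,\Omega) \to L^2_0(\Omega)$ to kill $p$. The hard part will be establishing the $\rot$ isomorphism on $(\nabla H^1_C(\Omega))^\perp$, where multiply-connectivity could be expected to contribute a space of harmonic obstructions; but Theorem \ref{thm:sdhrot} together with Lemma \ref{lem:isorot}(2) is precisely what neutralises this difficulty.
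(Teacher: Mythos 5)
Your proposal is correct and follows essentially the same route as the paper: solve the variational problem $(\nabla w_\eta,\nabla v)=\langle\ueta,\nabla v\rangle$ on $H^1_C(\Omega)$, observe that the remainder annihilates $\mathring{H}_0(\rot,\Omega)=\nabla H^1_C(\Omega)$, and hence factors through $\rot$ to produce $p\in L^2_0(\Omega)$, with the norm equivalence coming from Lemma \ref{lem:fine}. You merely spell out in more detail the surjectivity of $\rot$ from $(\nabla H^1_C(\Omega))^\perp$ onto $L^2_0(\Omega)$, which the paper leaves implicit.
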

\begin{proof}
Given $\ueta\in H^{-1}(\dv,\Omega)$, there exists a $w_\eta\in H^1_C(\Omega)$, such that 
$$
(\nabla w_\eta,\nabla v)=\langle\ueta,\nabla v\rangle,\ \ \forall\,v\in H^1_C(\Omega).
$$
Thus $\langle\ueta-\nabla w_\eta,\upsi\rangle=0$ if $\upsi\in\mathring{H}_0(\rot,\Omega)$, and therefore there exists a $p\in L^2_0(\Omega)$, such that 
$$
(p,\rot\upsi)=\langle \ueta-\nabla w_\eta,\upsi\rangle,\ \ \forall\,\upsi\in H_0(\rot,\Omega).
$$
Namely $\ueta-\nabla w_\eta=\curl p$. The norm equivalence follows immediately by Lemma \ref{lem:fine}.
\end{proof}

\begin{remark}
An orthogonal decomposition reads $\nabla H^1_C(\Omega)=\nabla H^1_0(\Omega)\oplus\nabla\mathcal{H}^1_C(\Omega)$. If $\utau\in\nabla\mathcal{H}^1_C(\Omega)$, $\dv\utau=\rot\utau=0$. Namely $\nabla\mathcal{H}^1_C(\Omega)$ is the harmonic component of the Helmhotlz decomposition.
\end{remark}

\section{Mixed formulations of the thick and thin plate models}
\label{sec:mixform}

\subsection{Model problems}

In this paper, we consider the Reissner-Mindlin plate model of the form 
\begin{equation}\label{eq:RMForm1}
\left\{
\begin{array}{rr}
-{\rm div}\mathcal{C}\mathcal{E}\uphi+\lambda t^{-2}(\uphi-\nabla \omega)&=0, \\ 
\lambda t^{-2}(-\Delta \omega+{\rm div}\uphi) & =g,
\end{array}
\right.
\end{equation}
on $\Omega$ together with conditions $\omega=0$ and $\uphi=\undertilde{0}$ for the hard clamped plate. Mechanically, $g$ is the scaled transverse loading function, $t$ is the plate thickness, $\mathcal{E}\phi$ is the symmetric part of the gradient of $\phi$, and the scalar constant $\lambda$ and constant tensor $\mathcal{C}$ depend on the material properties of the body. Usually, $\lambda=Ek/2(1+\nu)$ with $E$ Young's modulus, $\nu$ the Poisson ratio, and $k$ the shear correction factor. For all $2\times 2$ symmetric matrices $\tau$, $\mathcal{C}\tau$ is defined by 
$$
\mathcal{C}\tau = \frac{E}{12(1-\nu^2)}[(1-\nu)\tau+\nu{\rm tr}(\tau) Id].
$$

Mathematically, we consider the variational problem: given $\uf\in \uH{}^{-1}(\Omega)$ and $g\in H^{-1}(\Omega)$, to find $(\uphi^t,\omega^t)\in\uH{}^1_0(\Omega)\times H^1_0(\Omega)$, such that
\begin{equation}\label{eq:vfrmo}
(\mathcal{CE}(\uphi^t),\mathcal{E}(\upsi))+\lambda t^{-2}(\uphi^t-\nabla\,\omega^t,\upsi-\nabla\, \mu)=\langle\uf,\upsi\rangle+\langle g,\mu\rangle\ \ \forall\,(\upsi,\mu)\times \uH{}^1_0(\Omega)\times H^1_0(\Omega).
\end{equation}
In the sequel, for simplicity, we just take $\lambda=1$.

At the limit as $t$ tends to zero, we consider the Kirchhoff plate model: find $\omega^0\in H^2_0(\Omega)$, such that 
\begin{equation}
(\mathcal{CE}(\nabla\omega^0),\mathcal{E}(\mu))=\langle\uf,\nabla\mu\rangle+\langle g,\mu\rangle,\ \ \forall\,\mu\in H^2_0(\Omega).
\end{equation}

\subsection{A mixed formulation of the Reissner-Mindlin plate}

For $(\uphi^t,\omega^t)\in \uH{}^1_0(\Omega)\times H^1_0(\Omega)$ and $t>0$, introduce the shear force
\begin{equation}
\uzeta^t=t^{-2}(\nabla \omega^t-\uphi^t),
\end{equation}
then $\uzeta^t\in H_0(\rot,\Omega)$. Now denote, for $t\geqslant 0$,
\begin{equation}
Z_t:=\{(\upsi,\mu,\ueta)\in\uH{}^1_0(\Omega)\times H^1_0(\Omega)\times H_0(\rot,\Omega):t^2\ueta=(\nabla\mu-\upsi)\},
\end{equation} 
and the problem \eqref{eq:vfrmo} can be equivalently rewritten as: finding $(\uphi^t,\omega^t,\uzeta^t)\in Z_t$, such that 
\begin{equation}\label{eq:rmt}
(\mathcal{CE}(\uphi^t),\mathcal{E}(\upsi))+ t^2(\uzeta^t,\ueta)=\langle \uf,\upsi \rangle+\langle g,\mu\rangle,\quad\forall\,(\upsi,\mu,\ueta)\in Z_t.
\end{equation}
Note that, by Theorem \ref{thm:hdh-1div},  $(\upsi,\mu,\ueta)\in Z_t$ is equivalent to 
$$
(t^2\ueta-\nabla\mu+\upsi,\nabla z)+(\rot\,(t^2\ueta-\nabla \mu+\upsi),q)=0,\ \ \forall\,z\in H^1_C(\Omega),\ q\in L^2_0(\Omega). 
$$
%Indeed, denote $K:=\nabla H^1_C(\Omega)$, then $K$ is a complete subspace of $\h_0(\rot,\Omega)$. Denote $K^\perp$ the orthogonal subspace of $K$ in $\h_0(\rot,\Omega)$, then $K\perp K^\perp$ also in $\bs{L}^2(\Omega)$. And, since $ker(\rot)=K$, $\rot$ is bijective from $K^\perp$ to $L^2_0(\Omega)$. Now given $\usigma\in \h_0(\rot,\Omega)$, decompose $\usigma=\usigma{}_1+\usigma{}_2$ with respect to $\h_0(\rot,\Omega)=K\oplus K^\perp$. Then 
%$$
%(\usigma,\nabla z)+(\rot\usigma,q) =0, \forall\, z\in H^1_C(\Omega),\ q\in L^2_0(\Omega),
%$$
%is equivalent to
%$$
%(\usigma{}_1,\nabla z)+(\rot\usigma{}_2,q)=0,
%$$
%which implies further $\usigma{}_1=\undertilde{0}$ and $\usigma{}_2=\undertilde{0}$.

Now, we introduce the Lagrangian multiplier $y^t\in H^1_C(\Omega)$ and $p^t\in L^2_0(\Omega)$, and rewrite \eqref{eq:rmt} to: finding $(\uphi^t,\omega^t,\uzeta^t,y^t,p^t)\in X^t:=\uH{}^1_0(\Omega)\times H^1_0(\Omega)\times (t\undertilde{L}^2(\Omega)\cap t^2H_0(\rot,\Omega))\times H^1_C(\Omega)\times L^2_0(\Omega)$, such that, for $(\upsi,\mu,\ueta,z,q)\in X^t$,
\begin{multline}\label{eq:vpaug1}
(\mathcal{CE}(\uphi^t),\mathcal{E}(\upsi))+ t^2(\uzeta^t,\ueta)+ (t^2\ueta-\nabla\mu+\upsi,\nabla y^t)+(\rot\,(t^2\ueta-\nabla \mu+\upsi),p^t) 
\\
+(t^2\uzeta^t-\nabla\omega^t+\uphi^t,\nabla z) +(\rot\,(t^2\uzeta^t-\nabla \omega^t+\uphi^t),q)=\langle \uf,\upsi \rangle+\langle g,\mu\rangle.
\end{multline}
Note that $t\undertilde{L}^2(\Omega)\cap t^2H_0(\rot,\Omega)$ coincides to $H_0(\rot,\Omega)$ for $t>0$, but equipped with a different norm $t\|\cdot\|_{0,\Omega}+t^2\|\rot\cdot\|_{0,\Omega}$. For $(\upsi,\mu,\ueta,z,q)\in X^t$, 
$$
\|(\upsi,\mu,\ueta,z,q)\|_{X^t}=\|\upsi^t\|_{1,\Omega}+\|\mu\|_{1,\Omega}+t\|\ueta\|_{0,\Omega}+t^2\|\rot\ueta\|_{0,\Omega}+\|z\|_{1,\Omega}+\|q\|_{0,\Omega}.
$$

\subsubsection{Well-posedness of the system}

\begin{theorem}\label{thm:wprmmix}
Given $\uf\in \uH{}^{-1}(\Omega)$ and $g\in H^{-1}(\Omega)$, there exists a unique $(\uphi^t,\omega^t,\uzeta^t,y^t,p^t)\in X^t$ that satisfies \eqref{eq:vpaug1}, and
\begin{equation}\label{eq:stabyRM}
\|(\uphi^t,\omega^t,\uzeta^t,y^t,p^t)\|_{X_t}\cequiv \|\uf\|_{-1,\Omega}+\|g\|_{-1,\Omega}.
\end{equation}
Moroever, $(\uphi^t,\omega^t)$ solves \eqref{eq:vfrmo}.
\end{theorem}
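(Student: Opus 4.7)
The plan is to interpret \eqref{eq:vpaug1} as a symmetric saddle-point problem on $V^t\times Q$, with $V^t:=\uH{}^1_0(\Omega)\times H^1_0(\Omega)\times(t\undertilde{L}^2(\Omega)\cap t^2 H_0(\rot,\Omega))$ and $Q:=H^1_C(\Omega)\times L^2_0(\Omega)$, and with bilinear forms $a((\uphi,\omega,\uzeta),(\upsi,\mu,\ueta))=(\mathcal{CE}(\uphi),\mathcal{E}(\upsi))+t^2(\uzeta,\ueta)$ and $b((\upsi,\mu,\ueta),(z,q))=(t^2\ueta-\nabla\mu+\upsi,\nabla z)+(\rot(t^2\ueta-\nabla\mu+\upsi),q)$. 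Boundedness of both forms is routine, so the proof reduces to verifying Brezzi's coercivity-on-kernel and inf-sup conditions, uniformly in $t\in(0,1]$. I will first observe that $\ker b=Z_t$: setting $\utau:=t^2\ueta-\nabla\mu+\upsi\in H_0(\rot,\Omega)$, the condition $b(\cdot,(z,q))=0$ for all $(z,q)\in Q$ says that $\utau$ pairs to zero with every element of $H^{-1}(\dv,\Omega)=H_0(\rot,\Omega)'$ via the duality of Theorem \ref{thm:hdh-1div}, hence $\utau=0$.

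For coercivity on $Z_t$, Korn's inequality together with positivity of $\mathcal{C}$ yields $(\mathcal{CE}(\upsi),\mathcal{E}(\upsi))\gtrsim\|\upsi\|_{1,\Omega}^2$. Combining this with the kernel constraint $\nabla\mu=\upsi+t^2\ueta$ gives $\|\mu\|_{1,\Omega}\lesssim\|\upsi\|_{0,\Omega}+t^2\|\ueta\|_{0,\Omega}$ via Poincar\'e, while taking $\rot$ of the constraint gives $t^2\|\rot\ueta\|_{0,\Omega}=\|\rot\upsi\|_{0,\Omega}\leq\|\upsi\|_{1,\Omega}$. Summed with the $t^2\|\ueta\|_{0,\Omega}^2$ term already controlled by $a$, this recovers $\|(\upsi,\mu,\ueta)\|_{V^t}^2$ uniformly in $t\in(0,1]$.

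The inf-sup is the heart of the argument. For $(z,q)\in Q$, Theorem \ref{thm:hdh-1div} identifies $\nabla z+\curl q\in H^{-1}(\dv,\Omega)$ of norm $\cequiv\|z\|_{1,\Omega}+\|q\|_{0,\Omega}$, so some $\utau^\ast\in H_0(\rot,\Omega)$ with $\|\utau^\ast\|_{\rot,\Omega}\lesssim\|z\|_{1,\Omega}+\|q\|_{0,\Omega}$ satisfies $(\utau^\ast,\nabla z)+(\rot\utau^\ast,q)\gtrsim(\|z\|_{1,\Omega}+\|q\|_{0,\Omega})^2$. Applying Theorem \ref{thm:sdhrot} I would split $\utau^\ast=\nabla w+\uphi$ stably with $w\in H^1_0(\Omega)$ and $\uphi\in\uH{}^1_0(\Omega)$, and set $\upsi:=\uphi$, $\mu:=-w$, and \emph{crucially} $\ueta:=0$. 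Then $t^2\ueta-\nabla\mu+\upsi=\utau^\ast$ while $\|(\upsi,\mu,\ueta)\|_{V^t}=\|\uphi\|_{1,\Omega}+\|w\|_{1,\Omega}\lesssim\|z\|_{1,\Omega}+\|q\|_{0,\Omega}$ independently of $t$. Brezzi's theorem then yields existence, uniqueness and the upper bound in \eqref{eq:stabyRM}; the matching lower bound is immediate from \eqref{eq:vpaug1}. Equivalence with \eqref{eq:vfrmo} is a short check: testing against $(0,0,0,z,q)$ forces $(\uphi^t,\omega^t,\uzeta^t)\in Z_t$ by the kernel argument, and on $Z_t$ the remaining equations reduce to \eqref{eq:rmt}, which with $\uzeta^t=t^{-2}(\nabla\omega^t-\uphi^t)$ is just \eqref{eq:vfrmo}.

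The main obstacle is the $t$-uniformity in the inf-sup: the naive choice $\upsi=\mu=0$, $\ueta=t^{-2}\utau^\ast$ satisfies the constraint but costs $t\|\ueta\|_{0,\Omega}\cequiv t^{-1}\|\utau^\ast\|_{0,\Omega}$ in the $V^t$-norm and blows up as $t\to 0$. Theorem \ref{thm:sdhrot}, whose content is exactly a stable regular decomposition $H_0(\rot,\Omega)=\nabla H^1_0(\Omega)+\uH{}^1_0(\Omega)$ on multiply-connected $\Omega$, is what allows $\ueta$ to be taken as $0$, absorbing $\utau^\ast$ into the $t$-independent primary variables; this is why the decompositions of Section \ref{sec:sobspa} had to be developed before \eqref{eq:vpaug1} could be analyzed.
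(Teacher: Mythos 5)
Your proposal is correct and shares the paper's overall architecture --- recast \eqref{eq:vpaug1} as a saddle-point problem, verify Brezzi's conditions, and above all take the shear component $\ueta=\undertilde{0}$ in the inf-sup test function so that the constant is uniform in $t$ --- but your verification of the inf-sup condition follows a genuinely different route. The paper builds the test triple explicitly: it splits $z=z_1+z_2$ with $z_1\in\mathcal{H}^1_C(\Omega)$ and $z_2\in H^1_0(\Omega)$, takes $\uphi{}_1\in(\nabla H^2_C(\Omega))^\perp$ with $\rot\uphi{}_1=q$ via Lemma \ref{lem:isorot}, corrects the harmonic part by $\nabla\Phi$ with $\Phi\in\mathcal{H}^2_C(\Omega)$ (Lemma \ref{lem:findim} supplying the norm bounds), and absorbs the remainder into $\omega$, obtaining $b_t(\cdot,(z,q))=\|\nabla z\|_{0,\Omega}^2+\|q\|_{0,\Omega}^2$ exactly without ever passing through $H_0(\rot,\Omega)$. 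You instead take a near-maximizer $\utau^\ast\in H_0(\rot,\Omega)$ of the duality pairing with $\nabla z+\curl q$, which Theorem \ref{thm:hdh-1div} provides with the correct norm, and then convert $\utau^\ast$ into an $\uH{}^1_0(\Omega)\times H^1_0(\Omega)$ pair by the regular decomposition of Theorem \ref{thm:sdhrot}. Your argument is shorter and displays the logical role of the two structure theorems of Section \ref{sec:sobspa} more transparently; the paper's construction is longer but fully explicit, and that explicitness is what gets reused almost verbatim at the discrete level in the proof of Lemma \ref{lem:stabmixfem}, where your non-constructive maximizer would not transfer directly. A minor point in your favour: you write the diagonal shear term of $a$ as $t^2(\uzeta,\ueta)$, consistently with \eqref{eq:vpaug1}, whereas the paper's displayed definition of $a$ omits the factor $t^2$ --- apparently a typo, since without it the continuity bound in the $X^t$-norm stated immediately afterwards would not hold uniformly in $t$.
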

\begin{proof}
We study the well-posed-ness of the problem below: find $(\hat\uphi^t,\hat\omega^t,\hat\uzeta^t,\hat y^t,\hat p^t)\in X^t$, such that, for $(\upsi,\mu,\ueta,z,q)\in X^t$,
\begin{equation}
\left\{
\begin{array}{cccccll}
(\mathcal{CE}(\hat\uphi^t),\mathcal{E}(\upsi)) && &+(\upsi,\nabla\hat y^t) &+(\rot\,\upsi,\hat p^t) & = \langle\uf{}_\psi,\upsi\rangle
\\
& t^2(\hat\uzeta^t,\ueta)&&+ t^2(\eta,\nabla\hat y^t) &+t^2(\rot\,\ueta,\hat p^t)  & =\langle\uf{}_\eta,\ueta\rangle
\\
&& &-(\nabla\mu,\nabla\hat y^t)&& =\langle f_\mu,\mu\rangle
\\
(\hat\uphi^t,\nabla z)&+t^2(\hat\zeta^t,\nabla z) &-(\nabla\hat\omega^t,\nabla z) && & = \langle f_z,z\rangle
\\
(\rot\,\hat\uphi^t,q) & +t^2(\rot\,\hat\uzeta^t,q) &&& & =\langle f_q,q\rangle.
\end{array}
\right.
\end{equation}
We are going to show that 
\begin{equation}%{multline}
\|(\hat\uphi^t,\hat\omega^t,\hat\uzeta^t,\hat y^t,\hat p^t)\|_{X_t}\cequiv \|(\uf{}_\psi,\uf{}_\eta,f_\mu,f_z,f_q)\|_{X_t'},
\end{equation}%{multline}
for which we only have to verify Brezzi's conditions, and \eqref{eq:stabyRM} follows. Define
\begin{equation}
a((\uphi,\uzeta,\omega),(\upsi,\ueta,\mu)):=(\mathcal{CE}(\uphi),\mathcal{E}(\upsi))+(\uzeta,\ueta),
\end{equation}
and
\begin{equation}
b_t((\uphi,\uzeta,\omega),(z,q)):=(\uphi,\nabla z)+t^2(\zeta,\nabla z) -(\nabla\omega,\nabla z) +(\rot\,\uphi,q)  +t^2(\rot\,\uzeta,q).
\end{equation}
Then $Z_t=\{(\upsi,\mu,\ueta)\in\uH{}^1_0(\Omega)\times H^1_0(\Omega)\times H_0(\rot,\Omega):b_t((\upsi,\ueta,\mu),(z,q))=0,\ \forall\,(z,q)\in H^1_C(\Omega)\times L^2_0(\Omega)\}$. It is evident that 
\begin{multline*}
a((\uphi,\uzeta,\omega),(\upsi,\ueta,\mu))
\\
\leqslant (\|\uphi\|_{1,\Omega}+t\|\uzeta\|_{0,\Omega}+t^2\|\rot\uzeta\|_{0,\Omega}+\|\omega\|_{1,\Omega})(\|\upsi\|_{1,\Omega}+t\|\ueta\|_{0,\Omega}+t^2\|\rot\ueta\|_{0,\Omega}+\|\mu\|_{1,\Omega}),
\end{multline*}
$$
\mbox{and}\ \ 
b_t((\uphi,\uzeta,\omega),(z,q))\leqslant (\|\uphi\|_{1,\Omega}+t\|\uzeta\|_{0,\Omega}+t^2\|\rot\uzeta\|_{0,\Omega}+\|\omega\|_{1,\Omega})(\|z\|_{1,\Omega}+\|q\|_{0,\Omega}).
$$
Meanwhile, 
$$
a((\uphi,\uzeta,\omega),(\uphi,\uzeta,\omega))\geqslant C (\|\uphi\|_{1,\Omega}+t\|\uzeta\|_{0,\Omega}+t^2\|\rot\uzeta\|_{0,\Omega}+\|\omega\|_{1,\Omega})^2, \mbox{for}\ (\uphi,\uzeta,\omega)\in Z_t.
$$
It remains for us to show the inf-sup condition, which reads
\begin{equation}\label{eq:infsuprm}
\sup_{(\upsi,\mu,\ueta)\in\uH{}^1_0(\Omega)\times H^1_0(\Omega)\times H_0(\rot,\Omega)\setminus\{\bf 0\}}\frac{b_t((\uphi,\uzeta,\omega),(z,q))}{\|\uphi\|_{1,\Omega}+t\|\uzeta\|_{0,\Omega}+t^2\|\rot\uzeta\|_{0,\Omega}+\|\omega\|_{1,\Omega}}\geqslant C(\|z\|_{1,\Omega}+\|q\|_{0,\Omega})
\end{equation}
for any $(z,q)\in H^1_C(\Omega)\times L^2_0(\Omega)\setminus\{\mathbf{0}\}$. Given $(z,q)\in H^1_C(\Omega)\times L^2_0(\Omega)$, decompose $z=z_1+z_2$ with $z_1\in\mathcal{H}^1_C$ and $z_2\in H^1_0(\Omega)$ and choose 
\begin{itemize}
\item $\uphi{}_1\in (\nabla H^2_C(\Omega))^\perp$, such that $\rot\uphi{}_1=q$;
\item $\uphi{}_2=\nabla\Phi$, with $\Phi\in\mathcal{H}^2_C$, such that $(\uphi{}_2,\nabla s)=(\nabla z_1,\nabla s)-(\uphi{}_1,\nabla s)$ for any $s\in\mathcal{H}^1_C$;
\item $\uphi=\uphi{}_1+\uphi{}_2$;
\item $\omega\in H^1_0(\Omega)$ such that $(\nabla\omega,\nabla s)=(\uphi-\nabla z_2,\nabla s)$ for any $s\in H^1_0(\Omega)$;
\item $\uzeta=\undertilde{0}$.
\end{itemize}
For any $\Psi\in \mathcal{H}^2_C$, we can choose $\psi\in \mathcal{H}^1_C$, such that $\psi=\Psi$ on $\Gamma$, and then $(\nabla \Psi,\nabla s)=(\nabla\psi,\nabla s)$ for any $s\in \mathcal{H}^1_C$; this guarantees the existence of $\uphi{}_2$. Then 
\begin{equation}
b((\uphi,\uzeta,\omega),(z,q))=\|\nabla z\|_{0,\Omega}^2+\|q\|_{0,\Omega}^2.
\end{equation}
Meanwhile, $\|\uphi{}_1\|_{1,\Omega}\leqslant C\|q\|_{0,\Omega}$, $\|\uphi{}_2\|_{1,\Omega}\leqslant C\|\uphi{}_2\|_{0,\Omega}\leqslant C(\|z\|_{1,\Omega}+\|q\|_{0,\Omega})$, and $\|\omega\|_{1,\Omega}\leqslant C(\|z\|_{1,\Omega}+\|q\|_{0,\Omega})$. This confirms the inf-sup condition \eqref{eq:infsuprm} and completes the proof. 
\end{proof}

\subsubsection{Comparison with Brezzi-Fortin-Stenberg's mixed formulation}
Following the line in \cite{Brezzi.F;Fortin.M;Stenberg.R1991}, we can compose a mixed formulation of the model problem \eqref{eq:vfrmo}, which reads: given $\uf\in \uH{}^{-1}(\Omega)$ and $g\in H^{-1}(\Omega)$, to find $(\uphi{}_{\rm BFS}^t,\omega{}_{\rm BFS}^t,\ualpha{}_{\rm BFS}^t,y{}_{\rm BFS}^t,p{}_{\rm BFS}^t)\in X^t$, such that, for $(\upsi,\mu,\ubeta,z,q)\in X^t$, 
\begin{equation}\label{eq:BFSformulation}
\left\{
\begin{array}{cccccll}
(\mathcal{CE}(\uphi{}_{\rm BFS}^t),\mathcal{E}(\upsi))&&-(\nabla y{}_{\rm BFS}^t,\upsi)&-(p{}_{\rm BFS}^t,\rot\upsi)&&=&\langle\uf,\upsi\rangle
\\
&t^2(\ualpha{}_{\rm BFS}^t,\ubeta)&&-t^2(p{}_{\rm BFS}^t,\rot\ubeta)&&=&0
\\
-(\uphi{}_{\rm BFS}^t,\nabla z)&&-t^2(\nabla y{}_{\rm BFS}^t,\nabla z)&&(\nabla\omega{}_{\rm BFS}^t,\nabla z)&=&0 
\\
-(\rot\uphi{}_{\rm BFS}^t,q)&-t^2(\rot\ualpha{}_{\rm BFS}^t,q)&&&&=&0
\\
&&(\nabla y{}_{\rm BFS}^t,\nabla v)&&&=&\langle g,v\rangle.
\end{array}
\right.
\end{equation}
This system is the same as (2.16) through (2.18) of \cite{Brezzi.F;Fortin.M;Stenberg.R1991}, up to an $H^1_0(\Omega)$ replaced by $H^1_C(\Omega)$. It can be observed that:
\begin{enumerate}
\item systems \eqref{eq:vpaug1} and \eqref{eq:BFSformulation} are the same when $t=0$;
\item once $y^t$ ($y^t_{\rm BFS}$, respectively) is known for System \eqref{eq:vpaug1}(System \eqref{eq:BFSformulation}, respectively), both the two systems can be decoupled, and the decoupled subsystems are the same;
\item once $y^t$ ($y^t_{\rm BFS}$, respectively) can be decoupled from the entire system, the regularity analysis of the two systems are the same;
\item if $(\uphi^t,\omega^t,\uzeta^t,y^t,p^t)\in X^t$ and $(\uphi{}_{\rm BFS}^t,\omega{}_{\rm BFS}^t,\ualpha{}_{\rm BFS}^t,y{}_{\rm BFS}^t,p{}_{\rm BFS}^t)\in X^t$ are the solutions of \eqref{eq:vpaug1} and \eqref{eq:BFSformulation}, respectively, then
\begin{equation*}
(\uphi^t,\omega^t)=(\uphi{}^t_{\rm BFS},\omega^t_{\rm BFS}),\ \  (y^t,p^t)=-(y^t_{\rm BFS},p^t_{\rm BFS}),\ \  \ualpha{}^t_{\rm BFS}=\nabla y^t-\uzeta^t.
\end{equation*}
\end{enumerate}
We remark that, as multiply-connected domains are under consideration, $y^t$ ($y^t_{\rm BFS}$, as well) can not be solved out simply, and neither of the systems can be decomposed.

%\begin{lemma}
%Let $(\uphi^t,\omega^t,\uzeta^t,y^t,p^t)\in X$ and $(\uphi{}_{\rm BFS}^t,\omega{}_{\rm BFS}^t,\ualpha{}_{\rm BFS}^t,y{}_{\rm BFS}^t,p{}_{\rm BFS}^t)\in X$ be the solutions of \eqref{eq:vpaug1} and \eqref{eq:BFSformulation}, respectively. Then
%\begin{equation}
%\uphi^t=\uphi{}_{\rm BFS}^t, 
%\end{equation}
%\end{lemma}

%
%
\subsection{A mixed formulation of the Kirchhoff plate.}
Note that the space $Z_t$ makes sense for $t=0$. The Kirchhoff plate problem can be rewritten as: finding $(\uphi^0,\omega^0)\in Z_0$, such that 
\begin{equation}\label{eq:kp}
(\mathcal{E}(\uphi^0),\mathcal{CE}(\upsi))=\langle\uf,\upsi\rangle+\langle g,\mu\rangle,\ \ \forall\,(\upsi,\mu)\in Z_0.
\end{equation}
Again we can introduce Lagrangian multiplier, and have an expanded system: find $(\uphi^0,\omega^0,y^0,p^0)\in Y:=\uH{}^1_0(\Omega)\times H^1_0(\Omega)\times H^1_C(\Omega)\times L^2_0(\Omega)$, such that, for $(\upsi,\mu,z,q)\in Y$,
\begin{equation}\label{eq:mixkp}
\left\{
\begin{array}{cccccll}
(\mathcal{CE}(\uphi^0),\mathcal{E}(\upsi)) && &+(\upsi,\nabla y^0) &+(\rot\,\upsi,p^0) & = \langle\uf,\upsi\rangle,
\\
&& &-(\nabla\mu,\nabla y^0)&& =\langle g,\mu\rangle,
\\
(\uphi^0,\nabla z)&&-(\nabla\omega^0,\nabla z) && & = 0,
\\
(\rot\,\uphi^0,q) &&&& & =0.
\end{array}
\right.
\end{equation}
Similar to Theorem \ref{thm:wprmmix}, the theorem below surveys the well-posedness of \eqref{eq:mixkp}.
\begin{theorem}
Given $\uf\in\uH{}^{-1}(\Omega)$ and $g\in H^{-1}(\Omega)$, the problem \eqref{eq:mixkp} admits a unique solution $(\uphi^0,\omega^0,y^0,p^0)\in Y$, and $\|(\uphi^0,\omega^0,y^0,p^0)\|_Y\cequiv \|\uf\|_{-1,\Omega}+\|g\|_{-1,\Omega}$. Moreover, $\uphi^0=\nabla \omega^0$, and $(\uphi^0,\omega^0)$ solves \eqref{eq:kp}.
\end{theorem}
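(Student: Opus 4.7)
The plan is to cast \eqref{eq:mixkp} as a standard Brezzi saddle-point problem on the product of the primal space $\uH{}^1_0(\Omega)\times H^1_0(\Omega)$ and the multiplier space $H^1_C(\Omega)\times L^2_0(\Omega)$, with bilinear forms
\begin{equation*}
a((\uphi,\omega),(\upsi,\mu)):=(\mathcal{CE}(\uphi),\mathcal{E}(\upsi)),\quad b((\upsi,\mu),(z,q)):=(\upsi,\nabla z)-(\nabla\mu,\nabla z)+(\rot\upsi,q),
\end{equation*}
and combined primal forcing $\langle\uf,\upsi\rangle+\langle g,\mu\rangle$ with zero forcing on the multiplier side. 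Continuity of $a$ and $b$, and the bound $\|\langle\uf,\cdot\rangle+\langle g,\cdot\rangle\|_{(\uH{}^1_0\times H^1_0)'}\lesssim\|\uf\|_{-1,\Omega}+\|g\|_{-1,\Omega}$, are immediate; the real work is the kernel coercivity and the inf-sup condition.

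First I would characterize the kernel $K$ of $b$. From $(\rot\upsi,q)=0$ for all $q\in L^2_0(\Omega)$, together with $\int_\Omega\rot\upsi=0$ (automatic for $\upsi\in\uH{}^1_0(\Omega)$), one concludes $\rot\upsi=0$, so $\upsi-\nabla\mu\in\mathring{H}_0(\rot,\Omega)=\nabla H^1_C(\Omega)$ by Lemma \ref{lem:isorot}. Writing $\upsi-\nabla\mu=\nabla u$ with $u\in H^1_C(\Omega)$ and using $(\nabla u,\nabla z)=0$ for every $z\in H^1_C(\Omega)$, in particular $z=u$, forces $\upsi=\nabla\mu$; this also lifts the regularity to $\mu\in H^2_0(\Omega)$. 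Coercivity on $K$ then reduces to $a((\nabla\mu,\mu),(\nabla\mu,\mu))=(\mathcal{C}\nabla^2\mu,\nabla^2\mu)\gtrsim\|\mu\|_{2,\Omega}^2\gtrsim\|\nabla\mu\|_{1,\Omega}^2+\|\mu\|_{1,\Omega}^2$.

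For the inf-sup condition I would reuse the test-function construction from the proof of Theorem \ref{thm:wprmmix}, which already does the work with $\uzeta=\undertilde{0}$: given $(z,q)\in H^1_C(\Omega)\times L^2_0(\Omega)$, split $z=z_1+z_2$ with $z_1\in\mathcal{H}^1_C(\Omega)$ and $z_2\in H^1_0(\Omega)$, invoke Lemma \ref{lem:isorot} to pick $\uphi{}_1\in(\nabla H^2_C(\Omega))^\perp$ with $\rot\uphi{}_1=q$, correct by $\uphi{}_2=\nabla\Phi$ with $\Phi\in\mathcal{H}^2_C(\Omega)$ to match the $\mathcal{H}^1_C$-part, and obtain $\omega\in H^1_0(\Omega)$ by elliptic projection against $\uphi-\nabla z_2$. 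With $t=0$ the $t^2$-terms simply drop and the identity $b((\uphi{}_1+\uphi{}_2,\omega),(z,q))=\|\nabla z\|_{0,\Omega}^2+\|q\|_{0,\Omega}^2$ together with the norm bound $\|\uphi{}_1+\uphi{}_2\|_{1,\Omega}+\|\omega\|_{1,\Omega}\lesssim\|z\|_{1,\Omega}+\|q\|_{0,\Omega}$ survives verbatim.

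With both Brezzi conditions in hand, existence, uniqueness, and the stability estimate follow. Since $(\uphi^0,\omega^0)\in K$ automatically, the identity $\uphi^0=\nabla\omega^0$ with $\omega^0\in H^2_0(\Omega)$ is free. To recover \eqref{eq:kp}, I would test the first row of \eqref{eq:mixkp} with $\upsi=\nabla\mu$ for $\mu\in H^2_0(\Omega)$ (so $\rot\upsi=0$) and add the second row with the same $\mu$; the $(\nabla\mu,\nabla y^0)$ terms cancel, leaving $(\mathcal{CE}(\nabla\omega^0),\mathcal{E}(\nabla\mu))=\langle\uf,\nabla\mu\rangle+\langle g,\mu\rangle$, which is exactly \eqref{eq:kp}. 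The only genuinely non-trivial step is the inf-sup, and even there no new obstacle arises because the Reissner-Mindlin construction specializes to $t=0$ without modification.
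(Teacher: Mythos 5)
Your proposal is correct and follows essentially the route the paper intends: the paper gives no written proof here, deferring to the argument of Theorem \ref{thm:wprmmix}, and your Brezzi verification (kernel characterization via $\mathring{H}_0(\rot,\Omega)=\nabla H^1_C(\Omega)$, coercivity on the kernel $\{(\nabla\mu,\mu):\mu\in H^2_0(\Omega)\}$, and the inf-sup construction reused verbatim with $\uzeta=\undertilde{0}$) is exactly that argument specialized to $t=0$. The recovery of \eqref{eq:kp} by testing with $\upsi=\nabla\mu$ and adding the second row is also the intended mechanism, so nothing is missing.
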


\section{Finite element discretisation of the plate models}
\label{sec:fem}

\subsection{A general construction of mixed finite element discretization}
Given a subdivision of $\Omega$, let $\uH{}^1_{h0}\subset \uH{}^1_0$,  $H^1_h\subset H^1(\Omega)$,  $H_{h0}(\rot)\subset H_0(\rot,\Omega)$, and $L^2_{h0}\subset L^2_0(\Omega)$ be respective finite element spaces. Set $H^1_{hC}:=H^1_h\cap H^1_C(\Omega)$ and $H^1_{h0}:=H^1_h\cap H^1_0(\Omega)$. 

For the well-posed-ness of the finite element schemes, we introduce these assumptions.
\begin{description}
\item[{\textbf A1}] There exists a Fortin operator $\Pi_h^{\mathsf F}:\uH{}^1_0(\Omega)\to \uH{}^1_{h0}$, such that 
\begin{equation}
(\rot\Pi_h^{\mathsf F}\uphi,q_h)=(\rot\uphi,q_h),\ \forall\,q_h\in L^2_{h0},\ \ \ |\Pi_h^{\mathsf F}\uphi-\uphi|_{k,\Omega}\leqslant Ch^{1-k}|\uphi|_{1,\Omega},\ k=1,2.
\end{equation}
\item[{\textbf A2}] $\displaystyle\inf_{v_h\in H^1_h}\|w-v_h\|_{1,\Omega}\leqslant Ch^{1/2+\delta_0}\|w\|_{3/2+\delta_0,\Omega}$ for $w\in H^{3/2+\delta_0}(\Omega)$.
\item[{\textbf A3}] $\rot H_{h0}(\rot)=L^2_{h0}$.
\item[{\textbf A4}] $\nabla H^1_{hC}=\{\utau{}_h\in H_{h0}(\rot),\rot\utau{}_h=0\}$.
\item[{\textbf A5}] There exists an operator $\Pi_h^{\rot}:\uH^1(\Omega)\to H_{h0}(\rot)$, such that 
\begin{equation}
(\rot\Pi_h^\rot\upsi,q_h)=(\rot\upsi,q_h),\ \forall\,q_h\in L^2_{h0},\ \ \mbox{and}\ \ \ \|\uphi-\Pi_h^\rot\uphi\|_{0,\Omega}\leqslant Ch\|\uphi\|_{1,\Omega}.
\end{equation}
\end{description}

\subsubsection{Mixed element scheme for the Reissner-Mindlin plate model}
For the Reissner-Mindlin model, we consider the finite element problem: find $(\uphi{}_h^t,\uzeta{}_h^t,\omega_h^t,y_h^t,p_h^t)\in X_h:= \uH{}^1_{h0}\times H_{h0}(\rot)\times H^1_{h0}\times H^1_{hC} \times L^2_{h0}$, such that, for $(\upsi{}_h,\ueta{}_h,\mu_h,z_h,q_h)\in X_h$,
\begin{equation}\label{eq:rmmixdisgeneral}
\left\{
\begin{array}{cccccll}
(\mathcal{CE}(\uphi{}^t_h),\mathcal{E}(\upsi)) && &(\upsi{}_h,\nabla y^t_h) &+(\rot\,\upsi{}_h,p^t_h) & = \langle\uf,\upsi{}_h\rangle,
\\
&t^2(\uzeta{}^t_h,\ueta{}_h)&&+ t^2(\eta{}_h,\nabla y^t_h) &+t^2(\rot\,\ueta{}_h,p^t_h)  & =0,
\\
&& &-(\nabla\mu_h,\nabla y^t_h)&& =-\langle g,\mu_h\rangle,
\\
(\uphi{}^t_h,\nabla z_h)&+t^2(\uzeta{}^t_h,\nabla z_h) &-(\nabla\omega^t_h,\nabla z_h) && & = 0,
\\
(\rot\,\uphi{}^t_h,q_h) & +t^2(\rot\,\uzeta{}^t_h,q_h) &&& & =0.
\end{array}
\right.
\end{equation}

\begin{lemma}\label{lem:fortinhe}
Provided Assumptions {\bf A1} and {\bf A2}, given $z_h\in \mathcal{H}^1_{hC}:=\{y_h\in H^1_{hC}:(\nabla y_h,\nabla s_h)=0,\ \forall\,s_h\in H^1_{h0}\}$, there exists a $\upsi{}_h\in \uH{}^1_{h0}$, such that 
\begin{equation}
(\rot\ugamma{}_h,q_h)=0,\ \forall\,q_h\in L^2_{h0},\ \ (\ugamma{}_h,\nabla z_h)\geqslant C(\nabla z_h,\nabla z_h),\ \mbox{and}\ \ \|\ugamma{}_h\|_{1,\Omega}\leqslant C\|\nabla z_h\|_{0,\Omega}.
\end{equation}
\end{lemma}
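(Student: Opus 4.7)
The plan is to build $\ugamma_h$ by applying the Fortin operator from Assumption \textbf{A1} to the gradient of a smooth continuous function that reproduces the boundary trace of $z_h$. Since $z_h|_{\Gamma_j}$ is a constant $c_j$ on each connected component $\Gamma_j$, I will use Lemma \ref{lem:findim} to pick $z\in\mathcal{H}^1_C(\Omega)$ and $\Phi\in\mathcal{H}^2_C(\Omega)$ both having these same boundary constants. The elliptic regularity recalled after the definition of $\mathcal{H}^1_C$ places $z$ in $H^{3/2+\delta_0}(\Omega)$, while the zero-normal-derivative plus boundary-constancy of $\Phi\in H^2_C$ yields $\nabla\Phi\in\uH{}^1_0(\Omega)$. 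Because all norms on these finite-dimensional spaces are equivalent, Lemma \ref{lem:findim} gives $\|\Phi\|_{2,\Omega}\cequiv\|z\|_{3/2+\delta_0,\Omega}\cequiv\|\nabla z\|_{0,\Omega}$; and since $z$ minimizes the Dirichlet energy over $H^1_C(\Omega)$ functions with its prescribed trace while $z_h\in H^1_C(\Omega)$ is admissible, $\|\nabla z\|_{0,\Omega}\le\|\nabla z_h\|_{0,\Omega}$. I will then take $\ugamma_h:=\Pi_h^{\mathsf F}\nabla\Phi\in\uH{}^1_{h0}$.

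The $\rot$-orthogonality $(\rot\ugamma_h,q_h)=(\rot\nabla\Phi,q_h)=0$ is immediate from \textbf{A1}, and the $H^1$ bound $\|\ugamma_h\|_{1,\Omega}\lesssim\|\nabla\Phi\|_{1,\Omega}\lesssim\|\Phi\|_{2,\Omega}\lesssim\|\nabla z_h\|_{0,\Omega}$ follows from \textbf{A1}-stability combined with the chain above. The coercivity is the delicate part: I will split
\[
(\ugamma_h,\nabla z_h)=(\nabla z,\nabla z_h)+(\nabla(\Phi-z),\nabla z_h)+(\ugamma_h-\nabla\Phi,\nabla z_h).
\]
The first term equals $\|\nabla z\|_{0,\Omega}^2$ because $z-z_h\in H^1_0(\Omega)$ is $H^1$-orthogonal to the $\mathcal{H}^1_C$-harmonic $z$. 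For the second I exploit the discrete harmonicity of $z_h$ to subtract any $v_h\in H^1_{h0}$ inside the pairing and then apply \textbf{A2} to $\Phi-z\in H^1_0(\Omega)\cap H^{3/2+\delta_0}(\Omega)$, yielding an $O(h^{1/2+\delta_0})\|\nabla z_h\|_{0,\Omega}^2$ bound after invoking Lemma \ref{lem:findim}. The third term I handle with the standard $L^2$ companion to \textbf{A1}, namely $\|\Pi_h^{\mathsf F}\uphi-\uphi\|_{0,\Omega}\lesssim h\|\uphi\|_{1,\Omega}$, producing an $O(h)\|\nabla z_h\|_{0,\Omega}^2$ bound. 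Finally, a Pythagorean identity together with the same \textbf{A2}-based estimate $\|\nabla(z-z_h)\|_{0,\Omega}\lesssim h^{1/2+\delta_0}\|\nabla z_h\|_{0,\Omega}$ gives $\|\nabla z\|_{0,\Omega}^2\ge(1-Ch^{1+2\delta_0})\|\nabla z_h\|_{0,\Omega}^2$, and the three contributions combine to yield the desired coercivity for $h$ small enough.

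The main obstacle is balancing the three approximation errors (continuous-vs-discrete harmonic extension, $\mathcal{H}^1_C$-vs-$\mathcal{H}^2_C$ lifting, and Fortin projection) so that the leading $\|\nabla z\|_{0,\Omega}^2$ dominates the perturbations uniformly in $h$, which is what forces the smallness assumption on $h$. A secondary technical issue is that \textbf{A1} as stated only supplies $H^1$- and $H^2$-seminorm estimates for the Fortin operator, so the $L^2$ bound used to control the third term has to be treated as a standard companion property to be verified on the concrete operator constructed later in Section \ref{sec:fem}.
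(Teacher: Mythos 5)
Your proof is correct and follows essentially the same route as the paper's: lift the boundary constants of $z_h$ to $z\in\mathcal{H}^1_C(\Omega)$ and $\Phi\in\mathcal{H}^2_C(\Omega)$, set $\ugamma{}_h=\Pi_h^{\mathsf F}\nabla\Phi$, and control $(\ugamma{}_h,\nabla z_h)-\|\nabla z\|_{0,\Omega}^2$ via the \textbf{A1}/\textbf{A2} approximation estimates; your three-term splitting versus the paper's four-term expansion of both factors is only cosmetic. The two caveats you flag are well taken but are shared by the paper's own argument: the $L^2$ bound $\|\Pi_h^{\mathsf F}\uphi-\uphi\|_{0,\Omega}\leqslant Ch\|\uphi\|_{1,\Omega}$ is indeed invoked there (so the indices $k=1,2$ in \textbf{A1} should read $k=0,1$, consistent with the concrete operator $\Pi_h^{+e}$ built later), and the coercivity constant is likewise only secured for $h$ sufficiently small.
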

\begin{proof}
Set $\Phi\in\mathcal{H}^2_C(\Omega)$ and $z\in \mathcal{H}^1_C(\Omega)$, such that $\Phi|_\Gamma=z|_\Gamma=z_h|_\Gamma$. Then $z_h$ is the $H^1$ projection of $z$ into $H^1_{hC}$, and $\|z-z_h\|_{1,\Omega}\leqslant Ch^{1/2+\delta_0}\|z\|_{3/2+\delta_0,\Omega}\leqslant Ch^{1/2+\delta_0}\|z\|_{1,\Omega}$. Thus $\|z_h\|_{1,\Omega}\cequiv \|z\|_{1,\Omega}\cequiv \|\Phi\|_{2,\Omega}$. Set $\ugamma{}_h=\Pi_h^{\mathsf F}\nabla\Phi$, then $(\rot\ugamma{}_h,q_h)=0$ for $q_h\in L^2_{h0}$, $\|\ugamma{}_h\|_{1,\Omega}\leqslant C\|\nabla\Phi\|_{1,\Omega}\leqslant C\|\nabla z_h\|_{0,\Omega}$, and 
\begin{multline}
(\ugamma{}_h,\nabla z_{h})=(\nabla\Phi+(\ugamma{}_h-\nabla\Phi),\nabla z+(\nabla z_{h}-\nabla z))
\\
=(\nabla\Phi,\nabla z)+((\ugamma{}_h-\nabla\Phi),\nabla z)+(\nabla\Phi,(\nabla z_{h}-\nabla z))+((\ugamma{}_h-\nabla\Phi),(\nabla z_{h}-\nabla z)).
\end{multline}
Direct calculation leads to that $(\nabla\Phi,\nabla z)=(\nabla z,\nabla z)=\|\nabla z\|_{0,\Omega}^2$; $(\nabla\Phi,(\nabla z_{h}-\nabla z))\leqslant C\|\nabla\Phi\|_{0,\Omega}h^{1/2+\delta_0}\| z\|_{1,\Omega}$; by {\bf A1}, $((\gamma{}_h-\nabla\Phi),\nabla z)\leqslant Ch\|\nabla\Phi\|_{1,\Omega}\|\nabla z\|_{0,\Omega}$; finally, $((\ugamma{}_h-\nabla\Phi),(\nabla z_{h1}-\nabla z))\leqslant Ch^{3/2+\delta_0}\|\nabla\Phi\|_{1,\Omega}\|\nabla z\|_{1,\Omega}$. By Lemma \ref{lem:findim}, summing all above leads to that $|(\ugamma{}_h,\nabla z_{h})-(\nabla z_{h},\nabla z_{h})|\leqslant |(\ugamma{}_h,\nabla z_{h})-(\nabla z,\nabla z)|+|(\nabla z,\nabla z)-(\nabla z_{h},\nabla z_{h})|\leqslant Ch^{1/2+\delta_0}\|z_{h}\|_{1,\Omega}$. The proof is completed. 
\end{proof}

\begin{lemma}\label{lem:stabmixfem}
Provided Assumptions {\bf A1}, {\bf A2} and {\bf A3}, for any $\uf\in\uH^{-1}(\Omega)$ and $g\in H^{-1}(\Omega)$, there exists a unique $(\uphi{}_h^t,\uzeta{}_h^t,\omega_h^t,y_h^t,p_h^t)\in X_h$ that solves \eqref{eq:rmmixdisgeneral}, and $$\displaystyle\|(\uphi{}_h^t,\uzeta{}_h^t,\omega_h^t,y_h^t,p_h^t)\|_{X_t}\cequiv \sup_{(\upsi{}_h,\mu_h)\in\uH{}^1_{h0}\times H^1_{h0}}\frac{\langle \uf,\upsi{}_h\rangle-\langle g,\mu_h\rangle}{\|\upsi{}_h\|_{1,\Omega}+\|\mu\|_{1,\Omega}}.$$
\end{lemma}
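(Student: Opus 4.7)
The plan is to establish \eqref{eq:rmmixdisgeneral} as a well-posed discrete saddle-point system via the classical Brezzi theory, following the architecture of the proof of Theorem \ref{thm:wprmmix} but now relying on the finite element assumptions \textbf{A1}--\textbf{A3} to replace each continuous tool by its discrete counterpart. Write the unknown as $U_h=(\uphi_h,\uzeta_h,\omega_h)\in\uH{}^1_{h0}\times H_{h0}(\rot)\times H^1_{h0}$ and the multipliers as $\Lambda_h=(y_h,p_h)\in H^1_{hC}\times L^2_{h0}$; the bilinear forms $a$ and $b_t$ are the same as in the continuous proof, restricted to the finite element subspaces, so continuity descends automatically with constants independent of $h$ and $t$.

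For coercivity of $a$ on the discrete kernel $Z_{t,h}$, Korn's inequality in $\uH{}^1_{h0}$ yields $(\mathcal{CE}\uphi_h,\mathcal{E}\uphi_h)\gtrsim\|\uphi_h\|_{1,\Omega}^2$, and the term $t^2(\uzeta_h,\uzeta_h)$ supplies the $(t\|\uzeta_h\|_{0,\Omega})^2$ component. Testing the first kernel equation against $z_h=\omega_h\in H^1_{h0}\subset H^1_{hC}$ gives $\|\nabla\omega_h\|_{0,\Omega}\lesssim\|\uphi_h\|_{0,\Omega}+t^2\|\uzeta_h\|_{0,\Omega}$, and the second kernel equation combined with \textbf{A3} forces $\rot(\uphi_h+t^2\uzeta_h)=0$ in $L^2_{h0}$, whence $t^2\|\rot\uzeta_h\|_{0,\Omega}\lesssim\|\uphi_h\|_{1,\Omega}$. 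Summing, $a(U_h,U_h)\gtrsim\|U_h\|_{X_t}^2$ on $Z_{t,h}$.

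The principal step is the discrete inf-sup condition: given $(z_h,q_h)\in H^1_{hC}\times L^2_{h0}$, I would mimic the continuous construction inside the proof of Theorem \ref{thm:wprmmix}. Split $z_h=z_{h,1}+z_{h,2}$ with $z_{h,1}\in\mathcal{H}^1_{hC}$ and $z_{h,2}\in H^1_{h0}$, and set $\uzeta_h=\undertilde{0}$. Handle $q_h$ by picking the continuous preimage $\uphi^q\in(\nabla H^2_C(\Omega))^\perp$ with $\rot\uphi^q=q_h$ and applying the Fortin operator from \textbf{A1} to form $\uphi_{h,q}:=\Pi_h^{\mathsf F}\uphi^q\in\uH{}^1_{h0}$; then $(\rot\uphi_{h,q},q_h)=\|q_h\|_{0,\Omega}^2$ and $\|\uphi_{h,q}\|_{1,\Omega}\lesssim\|q_h\|_{0,\Omega}$. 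Handle the harmonic part $z_{h,1}$ by invoking Lemma \ref{lem:fortinhe} to obtain $\ugamma_h\in\uH{}^1_{h0}$ with $(\rot\ugamma_h,q_h')=0$ for every $q_h'\in L^2_{h0}$, $(\ugamma_h,\nabla z_{h,1})\gtrsim\|\nabla z_{h,1}\|_{0,\Omega}^2$, and $\|\ugamma_h\|_{1,\Omega}\lesssim\|\nabla z_{h,1}\|_{0,\Omega}$. Set $\uphi_h:=\ugamma_h+\lambda\uphi_{h,q}$ for a small fixed $\lambda>0$, and let $\omega_h\in H^1_{h0}$ be the discrete projection determined by $(\nabla\omega_h,\nabla v_h)=(\uphi_h,\nabla v_h)$ for all $v_h\in H^1_{h0}$. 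Then the $z_{h,2}$ contribution in $b_t$ cancels by construction of $\omega_h$, the term $(\nabla\omega_h,\nabla z_{h,1})$ vanishes by the definition of $\mathcal{H}^1_{hC}$, the $\rot$-pairing produces $\lambda\|q_h\|_{0,\Omega}^2$, the harmonic inner product gives a positive multiple of $\|\nabla z_{h,1}\|_{0,\Omega}^2$, and the cross term $\lambda(\uphi_{h,q},\nabla z_{h,1})$ is controlled by Young's inequality once $\lambda$ is chosen small enough to preserve a joint lower bound in $\|\nabla z_{h,1}\|_{0,\Omega}^2+\|q_h\|_{0,\Omega}^2$.

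The main obstacle is precisely this last step: unlike in the continuous proof where $\uphi_2$ is built so as to exactly cancel the spurious contribution of $\uphi_1$ along $\nabla\mathcal{H}^1_C(\Omega)$, the discrete Fortin lift $\uphi_{h,q}$ does not inherit that compensation, so the joint lower bound has to be extracted by a scaling-plus-absorption argument rather than an algebraic identity. Once the inf-sup and kernel coercivity are in place, standard discrete Brezzi theory delivers existence, uniqueness, and the equivalence of the discrete solution norm with the norm of the data functional asserted in the lemma.
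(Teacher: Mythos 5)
Your overall route is the same as the paper's: verify Brezzi's conditions, with the discrete inf-sup obtained from the splitting $z_h=z_{h1}+z_{h2}$ ($z_{h1}\in\mathcal{H}^1_{hC}$, $z_{h2}\in H^1_{h0}$), a Fortin lift $\Pi_h^{\mathsf F}$ of a continuous preimage of $q_h$, Lemma \ref{lem:fortinhe} for the discrete-harmonic component, and $\uzeta{}_h=\undertilde{0}$. Your treatment of the kernel coercivity and of the cross term $(\uphi{}_{h,q},\nabla z_{h1})$ by a small scaling parameter plus Young's inequality is, if anything, more explicit than the paper's, which asserts these points without detail.

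There is, however, a genuine gap in your choice of $\omega_h$. You define $\omega_h$ by $(\nabla\omega_h,\nabla v_h)=(\uphi{}_h,\nabla v_h)$ for all $v_h\in H^1_{h0}$ and then note that ``the $z_{h,2}$ contribution in $b_t$ cancels.'' It does cancel, and that is precisely the problem: with your test triple one gets $b_t((\uphi{}_h,\uzeta{}_h,\omega_h),(z_h,q_h))=(\uphi{}_h,\nabla z_{h1})+(\rot\uphi{}_h,q_h)$, which yields a lower bound only in $\|\nabla z_{h1}\|_{0,\Omega}^2+\|q_h\|_{0,\Omega}^2$ (as you yourself state), with no control whatsoever of $\|\nabla z_{h2}\|_{0,\Omega}$. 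Since $\|z_h\|_{1,\Omega}^2\cequiv\|\nabla z_{h1}\|_{0,\Omega}^2+\|\nabla z_{h2}\|_{0,\Omega}^2$ by the $\nabla$-orthogonality of the splitting, the inf-sup \eqref{eq:infsupmixdis} does not follow for general $z_h\in H^1_{hC}$; indeed for $z_h=z_{h2}\in H^1_{h0}$ and $q_h=0$ your construction produces $\uphi{}_h=\undertilde{0}$, $\omega_h=0$, hence $b_t=0$. The fix is exactly what the paper (and the continuous proof of Theorem \ref{thm:wprmmix}) does: define $\omega_h$ by $(\nabla\omega_h,\nabla s_h)=(\uphi{}_h,\nabla s_h)-(\nabla z_{h2},\nabla s_h)$ for all $s_h\in H^1_{h0}$, so that the positive term $\|\nabla z_{h2}\|_{0,\Omega}^2$ survives in $b_t$. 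With that correction the remainder of your argument goes through.
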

\begin{proof}
Again, we only have to verify Brezzi's condition for the system in \eqref{eq:rmmixdisgeneral}. The continuity and coercivity conditions are straightforward by Assumption {\bf A3}, and we are going to verify the inf-sup condition
\begin{equation}\label{eq:infsupmixdis}
\sup_{(\uphi{}_h,\uzeta{}_h,\omega_h)\in \uH{}^1_{h0}\times H_{h0}(\rot)\times H^1_{h0}\setminus\{\bf 0\}}\frac{b_t((\uphi{}_h,\uzeta{}_h,\omega_h),(z_h,q_h))}{(\|\uphi{}_h\|_{1,\Omega}+t\|\uzeta{}_h\|_{0,\Omega}+t^t\|\rot\uzeta{}_h\|_{0,\Omega}+\|\omega_h\|_{1,\Omega})}\geqslant C(\|z_h\|_{1,\Omega}+\|q_h\|_{0,\Omega}). 
\end{equation}
for any $(z_h,q_h)\in H^1_{hC}\times H^1_{h0}$. Given $(z_h,q_h)\in H^1_{hC}\times H^1_{h0}$, decompose $z_h=z_{h1}+z_{h2}$ with $z_{h1}\in\mathcal{H}^1_{hC}$ and $z_{h2}\in H^1_{h0}$. Then we choose 
\begin{itemize}
\item $\uphi{}^t_{h1}\in\undertilde{H}{}^{1}_{h0}$, such that $(\uphi{}^t_{h1},q_h)=\|q_h\|_{0,\Omega}^2$ and $\|\uphi{}^t_{h1}\|_{1,\Omega}\leqslant C\|q_h\|_{0,\Omega}$;
\item $\uphi{}^t_{h2}:=\Pi_h^{\mathsf F}\nabla \Phi$, with $\Phi\in\mathcal{H}^2_{C}$ and $\Phi|_\Gamma=z_{h1}|_\Gamma$;
\item $\uphi{}^t_h:=\uphi{}^t_{h1}+\uphi{}^t_{h2}$;
\item $\omega^t_h\in H^1_{h0}$, such that $(\nabla \omega^t_h,\nabla s_h)=(\uphi{}^t_h,\nabla s_h)-(\nabla z_{h2},\nabla s_h)$ for any $s_h\in H^1_{h0}$;
\item $\uzeta{}^t_h=\undertilde{0}$.
\end{itemize}
Then $\|\uphi{}^t_h\|_{1,\Omega}+\|\omega^t_h\|_{1,\Omega}\leqslant C(\|z_h\|_{1,\Omega}+\|q_h\|_{0,\Omega})$, and $b_t((\uphi{}^t_h,\uzeta{}^t_h,\omega^t_h),(z_h,q_h))\geqslant (q_h,q_h)+C(\nabla z_h,\nabla z_h)$ by Lemma \ref{lem:fortinhe}. The proof is completed. \end{proof}
The error estimate in energy norm follows immediately. 
\begin{theorem}\label{thm:convRM}
Provided Assumptions {\bf A1}, {\bf A2} and {\bf A3}, let $(\uphi{}^t,\uzeta{}^t,\omega^t,y^t,p^t)\in X^t$ and $(\uphi{}_h^t,\uzeta{}_h^t,\omega_h^t,y_h^t,p_h^t)\in X_h$ be the solutions of \eqref{eq:vpaug1} and \eqref{eq:rmmixdisgeneral}, respectively. With a constant $C$ uniform with respect to $t$, it holds that
\begin{multline}
\left\|\left((\uphi{}^t-\uphi{}^t_h),(\uzeta^t-\uzeta{}^t_h),(\omega^t-\omega^t_h),(y^t-y^t_h),(p^t-p^t_h)\right)\right\|_{X^t}
\\
\leqslant C\inf_{(\upsi{}_h,\ueta{}_h,\mu_h,z_h,q_h)\in X_h}  \left\|\left((\uphi{}^t-\upsi{}_h),(\uzeta^t-\ueta{}_h),(\omega^t-\mu_h),(y^t-z_h),(p^t-q_h)\right)\right\|_{X^t}.
\end{multline}
\end{theorem}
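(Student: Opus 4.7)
The plan is to derive Theorem \ref{thm:convRM} as a standard quasi-optimal error estimate (Cea-type) for mixed methods, built on the machinery already developed. The discretization is conforming, since $\uH^1_{h0}\subset\uH^1_0(\Omega)$, $H_{h0}(\rot)\subset H_0(\rot,\Omega)$, $H^1_{h0}\subset H^1_0(\Omega)$, $H^1_{hC}\subset H^1_C(\Omega)$ and $L^2_{h0}\subset L^2_0(\Omega)$, so $X_h\subset X^t$ and the $X^t$-norm restricts to the discrete space. Hence, subtracting \eqref{eq:rmmixdisgeneral} from \eqref{eq:vpaug1} tested only on elements of $X_h$ yields Galerkin orthogonality for the full saddle-point form obtained by assembling the equations of \eqref{eq:vpaug1}.

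The execution then follows the standard Brezzi template. First, I would write the combined bilinear form of \eqref{eq:vpaug1} as $\mathcal{A}^t(\cdot,\cdot)$ on $X^t\times X^t$ and observe two properties, both $t$-uniform: continuity in the $X^t$-norm (which is immediate from the bounds on $a$ and $b_t$ displayed in the proof of Theorem \ref{thm:wprmmix}), and the combined discrete inf-sup (equivalently, the well-posedness of \eqref{eq:rmmixdisgeneral} proved in Lemma \ref{lem:stabmixfem}), giving a $t$-uniform discrete stability constant $\beta>0$ with
\[
\beta\,\|U_h\|_{X^t}\leqslant \sup_{V_h\in X_h\setminus\{0\}}\frac{\mathcal{A}^t(U_h,V_h)}{\|V_h\|_{X^t}},\qquad\forall\,U_h\in X_h.
\]
Second, for any $U_h^{\mathrm I}=(\upsi_h,\ueta_h,\mu_h,z_h,q_h)\in X_h$ I would split
$U^t-U_h^t=(U^t-U_h^{\mathrm I})+(U_h^{\mathrm I}-U_h^t)$, and apply the discrete inf-sup to $U_h^{\mathrm I}-U_h^t\in X_h$. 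Galerkin orthogonality rewrites the numerator as $\mathcal{A}^t(U_h^{\mathrm I}-U^t,V_h)$, and $t$-uniform continuity of $\mathcal{A}^t$ bounds it by $C\|U^t-U_h^{\mathrm I}\|_{X^t}\|V_h\|_{X^t}$. A triangle inequality then gives the quasi-optimality, and taking the infimum over $U_h^{\mathrm I}\in X_h$ delivers the stated estimate.

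The delicate point, and the only place where $t$-uniformity could in principle be lost, is the discrete inf-sup constant in Lemma \ref{lem:stabmixfem}: the term $t\|\ueta\|_{0,\Omega}+t^2\|\rot\ueta\|_{0,\Omega}$ in the norm would normally degrade as $t\to 0$, but Lemma \ref{lem:stabmixfem} circumvents this by choosing $\uzeta_h=\undertilde{0}$ in the inf-sup test triple. That reduction hinges on Assumption {\bf A1}, which provides a Fortin operator compatible with the $\rot/L^2_{h0}$ pairing and thereby allows $\uphi_{h2}$ to be constructed with a norm bound independent of $t$. Since this $t$-uniformity is already embedded in Lemma \ref{lem:stabmixfem}, the remainder of the argument is a bookkeeping exercise and no further obstacle is expected; the main work will be simply to write the saddle-point form compactly and invoke the continuous and discrete stability results.
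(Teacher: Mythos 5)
Your proposal is correct and follows essentially the same route as the paper, which states the theorem as an immediate consequence of the conforming discretization, the continuity of the combined saddle-point form, and the $t$-uniform discrete stability established in Lemma \ref{lem:stabmixfem}; you have simply written out the standard Galerkin-orthogonality/triangle-inequality argument that the paper leaves implicit. Your remark correctly identifies that the only delicate point, the $t$-uniformity of the discrete inf-sup constant, is already absorbed into Lemma \ref{lem:stabmixfem} via the choice $\uzeta{}_h=\undertilde{0}$ and the Fortin operator of Assumption \textbf{A1}.
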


\subsubsection{A modified mixed scheme and a scheme of the primal Reissner-Mindlin plate}
Let $\Pi_h^\rot$ satisfy Assumption {\bf A5}. Now we consider a modified scheme of \eqref{eq:rmmixdisgeneral}: find $(\uphi{}^t_h,\uzeta{}^t_h,\omega^t_h,y^t_h,p^t_h)\in X_h$, such that, for $(\upsi{}_h,\ueta{}_h,\mu_h,z_h,q_h)\in X_h$,
\begin{equation}\label{eq:rmmixintdis}
\left\{
\begin{array}{cccccll}
(\mathcal{CE}(\uphi{}^t_h),\mathcal{E}(\upsi)) && &(\Pi_h^\rot\upsi{}_h,\nabla y^t_h) &+(\rot\,\upsi{}_h,p^t_h) & = \langle\uf,\upsi{}_h\rangle,
\\
&t^2(\uzeta{}^t_h,\ueta{}_h)&&+ t^2(\eta{}_h,\nabla y^t_h) &+t^2(\rot\,\ueta{}_h,p^t_h)  & =0,
\\
&& &-(\nabla\mu_h,\nabla y^t_h)&& =-\langle g,\mu_h\rangle,
\\
(\Pi_h^\rot\uphi{}^t_h,\nabla z_h)&+t^2(\uzeta{}^t_h,\nabla z_h) &-(\nabla\omega^t_h,\nabla z_h) && & = 0,
\\
(\rot\,\uphi{}^t_h,q_h) & +t^2(\rot\,\uzeta{}^t_h,q_h) &&& & =0.
\end{array}
\right.
\end{equation}

\begin{lemma}\label{lem:fortinhepirot}
Provided Assumptions {\bf A1}, {\bf A2} and {\bf A5}, given $z_h\in \mathcal{H}^1_{hC}$, there exists a $\upsi{}_h\in \uH{}^1_{h0}$, such that 
\begin{equation}
(\rot\upsi{}_h,q_h)=0,\ \forall\,q_h\in L^2_{h0},\ \ (\Pi^\rot_h\upsi{}_h,\nabla z_h)\geqslant C(\nabla z_h,\nabla z_h),\ \mbox{and}\ \ \|\upsi{}_h\|_{1,\Omega}\leqslant C\|\nabla z_h\|_{0,\Omega}.
\end{equation}
\end{lemma}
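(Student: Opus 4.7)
The plan is to reuse the construction from Lemma~\ref{lem:fortinhe} essentially verbatim and then argue that replacing $\upsi_h$ by $\Pi_h^\rot\upsi_h$ in the pairing against $\nabla z_h$ only introduces an $O(h)$ perturbation, which is absorbable into the coercive lower bound.

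First I would select $\Phi\in\mathcal{H}^2_C(\Omega)$ with $\Phi|_\Gamma=z_h|_\Gamma$ and define $\upsi_h:=\Pi_h^{\mathsf F}\nabla\Phi$. Since $\rot\nabla\Phi=0$, the commuting property in Assumption~{\bf A1} immediately gives $(\rot\upsi_h,q_h)=(\rot\nabla\Phi,q_h)=0$ for every $q_h\in L^2_{h0}$, so the first required identity holds. The $H^1$-stability of $\Pi_h^{\mathsf F}$ (read off from {\bf A1}) combined with Lemma~\ref{lem:findim}, which yields $\|\Phi\|_{2,\Omega}\cequiv\|z_h\|_{1,\Omega}$, gives $\|\upsi_h\|_{1,\Omega}\leq C\|\nabla\Phi\|_{1,\Omega}\leq C\|\nabla z_h\|_{0,\Omega}$, which is the third required bound.

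For the lower bound on $(\Pi_h^\rot\upsi_h,\nabla z_h)$ I would split
\begin{equation*}
(\Pi_h^\rot\upsi_h,\nabla z_h)=(\upsi_h,\nabla z_h)+(\Pi_h^\rot\upsi_h-\upsi_h,\nabla z_h).
\end{equation*}
Since $\upsi_h$ is the very $\ugamma_h$ constructed inside the proof of Lemma~\ref{lem:fortinhe}, that proof delivers $(\upsi_h,\nabla z_h)\geq C\|\nabla z_h\|_{0,\Omega}^2$ once $h$ lies below a fixed threshold. The remainder is handled by Cauchy--Schwarz and the $O(h)$ approximation property from {\bf A5}:
\begin{equation*}
|(\Pi_h^\rot\upsi_h-\upsi_h,\nabla z_h)|\leq\|\Pi_h^\rot\upsi_h-\upsi_h\|_{0,\Omega}\|\nabla z_h\|_{0,\Omega}\leq Ch\|\upsi_h\|_{1,\Omega}\|\nabla z_h\|_{0,\Omega}\leq Ch\|\nabla z_h\|_{0,\Omega}^2,
\end{equation*}
which for $h$ small enough can be absorbed into the coercive term, yielding the claim with a possibly smaller but still positive constant.

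I do not foresee a genuine obstacle: the structure of the argument is dictated by Lemma~\ref{lem:fortinhe}, and the novelty is only the $\Pi_h^\rot$ substitution, which by {\bf A5} is higher order. The only point requiring care is to verify that after absorbing the $O(h)$ term the constant stays bounded away from zero uniformly for $h$ below the threshold, and that $\Pi_h^\rot\upsi_h$ is well-defined (which holds since $\upsi_h\in\uH^1_{h0}\subset\uH^1(\Omega)$, the domain of $\Pi_h^\rot$).
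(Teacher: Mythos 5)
Your proposal is correct and matches the paper's intent exactly: the paper's own proof is the one-line remark that ``by Assumption A5, the proof is along the same line as that of Lemma \ref{lem:fortinhe}'', and your argument is precisely that line, packaged slightly more economically by citing the conclusion of Lemma \ref{lem:fortinhe} for $(\upsi_h,\nabla z_h)$ and treating $(\Pi_h^\rot\upsi_h-\upsi_h,\nabla z_h)$ as an $O(h)$ perturbation absorbed via A5. The only caveat you already flag yourself --- that the coercivity holds for $h$ below a fixed threshold --- is equally implicit in the paper's proof of Lemma \ref{lem:fortinhe}, so there is nothing to repair.
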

\begin{proof}
By Assumption {\bf A5} of $\Pi_h^\rot$, the proof is along the same line as that of Lemma \ref{lem:fortinhe}.
\end{proof}

\begin{lemma} 
Provided Assumptions {\bf A1}, {\bf A2}, {\bf A3} and {\bf A5}, the scheme \eqref{eq:rmmixintdis} is stable on $X_h$ (as a subspace of $X^t$).
\end{lemma}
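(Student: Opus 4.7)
The plan is to verify Brezzi's conditions for \eqref{eq:rmmixintdis} in the same fashion as Lemma \ref{lem:stabmixfem}, after replacing the coupling by its modified variant
\begin{equation*}
\tilde b_t((\uphi,\uzeta,\omega),(z,q)) := (\Pi_h^\rot\uphi,\nabla z) + t^2(\uzeta,\nabla z) - (\nabla\omega,\nabla z) + (\rot\uphi,q) + t^2(\rot\uzeta,q),
\end{equation*}
while keeping the diagonal form $a$ unchanged and retaining the norms from Lemma \ref{lem:stabmixfem}. The two essential novelties are that Assumption \textbf{A5} must be invoked to control $\Pi_h^\rot$ in continuity and kernel analysis, and Lemma \ref{lem:fortinhepirot} replaces Lemma \ref{lem:fortinhe} in the inf-sup construction.

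First I would dispose of continuity and kernel coercivity. Continuity of $\tilde b_t$ reduces to $\|\Pi_h^\rot\upsi\|_{0,\Omega}\lesssim\|\upsi\|_{1,\Omega}$, which follows from the triangle inequality and the approximation bound in \textbf{A5}; the remaining pairings are handled exactly as in Lemma \ref{lem:stabmixfem}. For coercivity of $a$ on $\ker\tilde b_t$, the argument of Theorem \ref{thm:wprmmix} transfers: the kernel identity together with Assumption \textbf{A3} forces $\rot\uphi_h=-t^2\rot\uzeta_h$ in $L^2_{h0}$, yielding control of $\|\uphi_h\|_{1,\Omega}+t\|\uzeta_h\|_{0,\Omega}+t^2\|\rot\uzeta_h\|_{0,\Omega}+\|\omega_h\|_{1,\Omega}$.

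The main step is the discrete inf-sup estimate. Given $(z_h,q_h)\in H^1_{hC}\times L^2_{h0}$, I would split $z_h=z_{h1}+z_{h2}$ with $z_{h1}\in\mathcal{H}^1_{hC}$ and $z_{h2}\in H^1_{h0}$, and construct $(\uphi^t_h,\uzeta^t_h,\omega^t_h)$ in parallel with Lemma \ref{lem:stabmixfem}: let $\uphi^t_{h1}\in\uH{}^1_{h0}$ satisfy $(\rot\uphi^t_{h1},q_h)=\delta\|q_h\|^2_{0,\Omega}$ and $\|\uphi^t_{h1}\|_{1,\Omega}\lesssim\delta\|q_h\|_{0,\Omega}$, obtained by applying $\Pi_h^{\mathsf F}$ to a preimage of $q_h$ guaranteed by \textbf{A3}, with a small scalar $\delta>0$ to be fixed; let $\uphi^t_{h2}:=\Pi_h^{\mathsf F}\nabla\Phi$ with $\Phi\in\mathcal{H}^2_C$ and $\Phi|_\Gamma=z_{h1}|_\Gamma$; set $\uphi^t_h:=\uphi^t_{h1}+\uphi^t_{h2}$, $\uzeta^t_h:=\undertilde{0}$, and define $\omega^t_h\in H^1_{h0}$ by $(\nabla\omega^t_h,\nabla s_h)=(\Pi_h^\rot\uphi^t_h,\nabla s_h)-(\nabla z_{h2},\nabla s_h)$ for all $s_h\in H^1_{h0}$. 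Three structural ingredients then drive the evaluation of $\tilde b_t((\uphi^t_h,\uzeta^t_h,\omega^t_h),(z_h,q_h))$: the $\mathcal{H}^1_{hC}$-orthogonality annihilates $(\nabla\omega^t_h,\nabla z_{h1})$; the defining identity of $\omega^t_h$ (with $s_h=z_{h2}$) produces $\|\nabla z_{h2}\|^2_{0,\Omega}$; and Lemma \ref{lem:fortinhepirot} yields $(\Pi_h^\rot\uphi^t_{h2},\nabla z_{h1})\gtrsim\|\nabla z_{h1}\|^2_{0,\Omega}$. Combined with $(\rot\uphi^t_h,q_h)=\delta\|q_h\|^2_{0,\Omega}$, this furnishes the desired lower bound on $\tilde b_t$.

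I expect the principal obstacle to be the cross term $(\Pi_h^\rot\uphi^t_{h1},\nabla z_{h1})$, which is not sign-controlled by any orthogonality. I would bound it by $\|\Pi_h^\rot\uphi^t_{h1}\|_{0,\Omega}\|\nabla z_{h1}\|_{0,\Omega}\lesssim\delta\|q_h\|_{0,\Omega}\|\nabla z_{h1}\|_{0,\Omega}$ using \textbf{A5}, and absorb it via a Young-type inequality into the positive contributions $\|\nabla z_{h1}\|^2_{0,\Omega}$ and $\|q_h\|^2_{0,\Omega}$, choosing $\delta$ small and independent of $t$ and $h$ so that both resulting coefficients remain strictly positive. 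Dividing by $\|\uphi^t_h\|_{1,\Omega}+\|\omega^t_h\|_{1,\Omega}\lesssim \|z_h\|_{1,\Omega}+\|q_h\|_{0,\Omega}$ closes the inf-sup condition and therefore completes the stability proof.
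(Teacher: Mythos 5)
Your proposal is correct and follows essentially the same route as the paper: verify Brezzi's conditions, obtain continuity and kernel coercivity from the $L^2$-boundedness of $\Pi_h^\rot$ guaranteed by \textbf{A5}, and repeat the inf-sup construction of Lemma \ref{lem:stabmixfem} with Lemma \ref{lem:fortinhepirot} in place of Lemma \ref{lem:fortinhe}. Your explicit treatment of the cross term $(\Pi_h^\rot\uphi^t_{h1},\nabla z_{h1})$ via the scaling parameter $\delta$ and Young's inequality is a welcome elaboration of a step the paper leaves implicit, but it is not a departure from its argument.
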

\begin{proof}
For the continuity and coercivity, we only have to note that $\Pi_h^\rot$ is bounded from $\uH{}^1_0(\Omega)$ to $\undertilde{L}^2(\Omega)$. For the inf-sup condition, the proof is the same as that of Lemma \ref{lem:stabmixfem} by virtue of Lemma \ref{lem:fortinhepirot}. Brezzi's conditions are verified and the proof is completed.
\end{proof}

\begin{theorem}
Provided Assumptions {\bf A1}, {\bf A2}, {\bf A3} and {\bf A5}, let $(\uphi{}^t,\uzeta{}^t,\omega^t,y^t,p^t)\in X^t$ and $(\uphi{}_h^t,\uzeta{}_h^t,\omega_h^t,y_h^t,p_h^t)\in X_h$ be the solutions of \eqref{eq:vpaug1} and \eqref{eq:rmmixintdis}, respectively. Then Uniform in $t$,
\begin{multline}
\left\|\left((\uphi{}^t-\uphi{}^t_h),(\uzeta^t-\uzeta{}^t_h),(\omega^t-\omega^t_h),(y^t-y^t_h),(p^t-p^t_h)\right)\right\|_{X^t}
\\
\lesssim \inf_{(\upsi{}_h,\ueta{}_h,\mu_h,z_h,q_h)\in X_h} \left\|\left((\uphi{}^t-\upsi{}_h),(\uzeta^t-\ueta{}_h),(\omega^t-\mu_h),(y^t-z_h),(p^t-q_h)\right)\right\|_{X^t}
\\
+h(\|\uf\|_{-1,\Omega}+\|g\|_{-1,\Omega}) .
\end{multline}
\end{theorem}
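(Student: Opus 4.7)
The plan is to apply a Strang-type argument for the consistency-perturbed saddle-point scheme \eqref{eq:rmmixintdis}. By the lemma immediately preceding this theorem, the discrete system is uniformly inf-sup stable on $X_h$ with respect to $\|\cdot\|_{X^t}$, and continuity of the discrete bilinear form is straightforward. It therefore suffices to bound, in the $X^t$-norm, the consistency residual produced when the exact solution is plugged into the discrete equations, and then combine this with best approximation in $X_h$ in the standard way.

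The first step is to substitute the exact solution $(\uphi^t,\uzeta^t,\omega^t,y^t,p^t)$ into the left-hand side of \eqref{eq:rmmixintdis} paired with an arbitrary discrete test tuple $(\upsi_h,\ueta_h,\mu_h,z_h,q_h)\in X_h$, and subtract the identities furnished by \eqref{eq:vpaug1}. All conforming contributions cancel, leaving precisely the two places where $\Pi_h^\rot$ has replaced the identity, namely $\bigl((\Pi_h^\rot - I)\upsi_h,\nabla y^t\bigr)$ from the first discrete equation and $\bigl((\Pi_h^\rot - I)\uphi^t,\nabla z_h\bigr)$ from the fourth. Assumption \textbf{A5} gives
$$|((\Pi_h^\rot - I)\upsi_h,\nabla y^t)| \leqslant Ch\,\|\upsi_h\|_{1,\Omega}\,\|y^t\|_{1,\Omega}, \qquad |((\Pi_h^\rot - I)\uphi^t,\nabla z_h)| \leqslant Ch\,\|\uphi^t\|_{1,\Omega}\,\|z_h\|_{1,\Omega},$$
so the consistency functional is dominated by $Ch(\|\uphi^t\|_{1,\Omega}+\|y^t\|_{1,\Omega})$ times the $X^t$-norm of the test tuple. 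Invoking Theorem \ref{thm:wprmmix} converts this into $h(\|\uf\|_{-1,\Omega}+\|g\|_{-1,\Omega})$ times that norm, uniformly in $t$.

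The second step is the standard Strang reduction at the level of mixed schemes: for any interpolant of the exact solution in $X_h$, the difference between the discrete solution and the interpolant satisfies a discrete system whose right-hand side splits into (a) the discrete bilinear form applied to the interpolation error, controlled via continuity by its $X^t$-norm, and (b) the consistency residual bounded above. The uniform discrete inf-sup stability from the preceding lemma then bounds this difference in $\|\cdot\|_{X^t}$ by the sum of these two contributions. A triangle inequality against the interpolant and an infimum over interpolants yield the claimed estimate.

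The main technical point that requires care is $t$-uniformity of the consistency bound. This is exactly where Theorem \ref{thm:wprmmix} is needed: only the $H^1$-pieces $\uphi^t$ and $y^t$ of the exact solution appear in the residual terms, and their norms are controlled uniformly in $t$ by $\|\uf\|_{-1,\Omega}+\|g\|_{-1,\Omega}$, whereas the $t$-weighted shear-force component $\uzeta^t$ never enters the consistency correction. Together with the $t$-uniform stability and continuity constants on $X^t$, this produces the advertised $t$-independent quasi-optimality estimate with an additive $O(h)$ consistency term.
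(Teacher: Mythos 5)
Your proposal is correct and follows essentially the same route as the paper: a Strang-type estimate for the perturbed mixed scheme (the paper cites Proposition 5.5.6 of Boffi--Brezzi--Fortin for this step, which you spell out), with the consistency residual reduced to exactly the two $\Pi_h^\rot$-perturbation terms, bounded via Assumption \textbf{A5}, and converted to $h(\|\uf\|_{-1,\Omega}+\|g\|_{-1,\Omega})$ by Theorem \ref{thm:wprmmix}.
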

\begin{proof}
By the fundamental estimation of Strang type for which we refer to, e.g., Proposition 5.5.6 of \cite{Boffi.D;Brezzi.F;Fortin.M2013}, it holds that 
\begin{multline}
\left\|\left((\uphi{}^t-\uphi{}^t_h),(\uzeta^t-\uzeta{}^t_h),(\omega^t-\omega^t_h),(y^t-y^t_h),(p^t-p^t_h)\right)\right\|_{X^t}
\\
\lesssim\inf_{(\upsi{}_h,\ueta{}_h,\mu_h,z_h,q_h)\in X_h}  \left\|\left((\uphi{}^t-\upsi{}_h),(\uzeta^t-\ueta{}_h),(\omega^t-\mu_h),(y^t-z_h),(p^t-q_h)\right)\right\|_{X^t}
\\
+\sup_{\upsi{}_h\in \uH{}^1_{h0}}\frac{(\upsi{}_h-\Pi_h^\rot\upsi{}_h,\nabla y^t)}{\|\upsi{}_h\|_{1,\Omega}}+\sup_{z_h\in H^1_{h0}}\frac{(\uphi^t-\Pi_h^\rot\uphi^t,\nabla z_h)}{\|z_h\|_{1,\Omega}}.
\end{multline}
By Assumption {\bf A5}, 
$$
\sup_{\upsi{}_h\in \undertilde{H}{}_{h0}^1}\frac{(\upsi{}_h-\Pi_h^\rot\upsi{}_h,\nabla y^t)}{\|\upsi{}_h\|_{1,\Omega}}\leqslant Ch\|\nabla y^t\|_{0,\Omega},\ \mbox{and}\ \sup_{z_h\in H^1_{h0}}\frac{(\uphi^t-\Pi_h^\rot\uphi^t,\nabla z_h)}{\|z_h\|_{1,\Omega}}\leqslant Ch\|\uphi^t\|_{1,\Omega}.
$$
The proof is completed by noting Theorem \ref{thm:wprmmix}. 
\end{proof}
\paragraph{\bf A scheme of the primal Reissner-Mindlin plate} We begin with the observation below. 
\begin{lemma}
Provided Assumption {\bf A4}, let $(\uphi{}_h^t,\uzeta{}_h^t,\omega_h^t,y_h^t,p_h^t)\in X_h$ be the solution of \eqref{eq:rmmixintdis}, then $\uzeta{}^t_h=t^{-2}(\nabla\omega^t_h-\Pi^\rot_h\uphi{}^t_h)$.
\end{lemma}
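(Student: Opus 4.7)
The plan is to introduce the auxiliary field $\ubeta_h := t^2\uzeta^t_h + \Pi_h^\rot\uphi^t_h - \nabla\omega^t_h$ and show $\ubeta_h = 0$; this identity is exactly the claim. First I would verify that $\ubeta_h$ lives in $H_{h0}(\rot)$: the summand $\uzeta^t_h$ does by construction of $X_h$, $\Pi_h^\rot\uphi^t_h$ does by Assumption \textbf{A5}, and $\nabla\omega^t_h$ with $\omega^t_h\in H^1_{h0}\subseteq H^1_{hC}$ does by the inclusion $\nabla H^1_{hC}\subseteq H_{h0}(\rot)$ that \textbf{A4} provides. Although only \textbf{A4} is explicitly named in the hypothesis, the other labeled assumptions are implicitly in force since the scheme \eqref{eq:rmmixintdis} itself involves $\Pi_h^\rot$ and $L^2_{h0}$.

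Next I extract information from the last two equations of \eqref{eq:rmmixintdis}, which are the only ones not involving the Lagrange multipliers $y^t_h,p^t_h$. The fifth equation combined with the commuting property $(\rot\Pi_h^\rot\uphi^t_h,q_h)=(\rot\uphi^t_h,q_h)$ for all $q_h\in L^2_{h0}$ granted by \textbf{A5}, together with $\rot\nabla\omega^t_h=0$, yields $(\rot\ubeta_h,q_h)=0$ for every $q_h\in L^2_{h0}$. Assumption \textbf{A3} says $\rot H_{h0}(\rot)=L^2_{h0}$, so in particular $\rot\ubeta_h\in L^2_{h0}$ itself, and testing against $q_h=\rot\ubeta_h$ gives $\rot\ubeta_h=0$.

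This is where Assumption \textbf{A4} enters decisively: its content is precisely that the kernel of $\rot$ restricted to $H_{h0}(\rot)$ equals $\nabla H^1_{hC}$. Hence there exists $v_h\in H^1_{hC}$ with $\ubeta_h=\nabla v_h$. Feeding this into the fourth equation of \eqref{eq:rmmixintdis}, which reads $(\ubeta_h,\nabla z_h)=0$ for all $z_h\in H^1_{hC}$, and choosing $z_h=v_h$ produces $\|\nabla v_h\|_{0,\Omega}^2=0$, so $\nabla v_h=0$ and therefore $\ubeta_h=0$. Rearranging gives the stated identity. The only substantive step is the kernel characterisation from \textbf{A4}; once that is in hand, the rest is a short bookkeeping argument with the two rows of the discrete system and the commuting property of $\Pi_h^\rot$.
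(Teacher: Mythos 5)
Your proof is correct. The paper states this lemma as a bare ``observation'' with no proof supplied, and your argument --- setting $\ubeta{}_h:=t^2\uzeta{}_h^t+\Pi_h^\rot\uphi{}_h^t-\nabla\omega_h^t$, deducing $(\rot\ubeta{}_h,q_h)=0$ from the fifth row of \eqref{eq:rmmixintdis} together with the commuting property of $\Pi_h^\rot$, using {\bf A4} to write $\ubeta{}_h=\nabla v_h$ with $v_h\in H^1_{hC}$, and then testing the fourth row with $z_h=v_h$ --- is precisely the intended one. Your side remark is also accurate: the statement's hypothesis ``provided {\bf A4}'' is incomplete as written, since one additionally needs {\bf A5} (for $(\rot\Pi_h^\rot\uphi{}_h^t,q_h)=(\rot\uphi{}_h^t,q_h)$) and the inclusion $\rot H_{h0}(\rot)\subseteq L^2_{h0}$ from {\bf A3} to upgrade the weak orthogonality to $\rot\ubeta{}_h=0$; these are in force wherever the scheme \eqref{eq:rmmixintdis} is considered, so the gap is in the paper's phrasing rather than in your argument.
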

A primal finite element scheme can be given as: find $(\uphi{}^t_h,\omega{}^t_h)\in \undertilde{H}{}^{1}_{h0}\times H^1_{h0}$, such that 
\begin{equation}\label{eq:DLform}
(\mathcal{CE}(\uphi{}^t_h),\mathcal{E}(\upsi{}_h))+t^{-2}(\Pi_h^\rot\uphi{}^t_h-\omega^t_h,\Pi_h^\rot\upsi{}_h-\mu_h)=\langle\uf,\upsi{}_h\rangle+\langle g,\mu_h\rangle,\ \forall\,(\upsi{}_h,\mu{}_h)\in \undertilde{H}{}^{1}_{h0}\times H^1_{h0}.
\end{equation}

Provided Assumptions {\bf A1} through {\bf A5}, the primal scheme \eqref{eq:DLform} is equivalent to the scheme \eqref{eq:rmmixintdis}, in the sense below: 
\begin{enumerate}
\item let $(\uphi{}_h^t,\uzeta{}_h^t,\omega_h^t,y_h^t,p_h^t)\in X_h$ be the solution of \eqref{eq:rmmixintdis}, then $(\uphi{}_h^t,\omega_h^t)$ solves \eqref{eq:DLform};
\item as evidently, the solution of \eqref{eq:DLform}, if existent, is unique, any solution of \eqref{eq:DLform} is part of a solution of \eqref{eq:rmmixintdis}. 
\end{enumerate}

\subsubsection{Mixed element schemes of the Kirchhoff plate}

Associated to \eqref{eq:rmmixdisgeneral}, a finite element scheme for the Kirchhoff plate model is: find $(\uphi{}_h,\omega_h,y_h,p_h)\in Y_h:=\uH{}^1_{h0}\times H^1_{h0}\times H^1_{hC} \times L^2_{h0}$, such that, for $(\upsi{}_h,\mu_h,z_h,q_h)\in Y_h$,
\begin{equation}\label{eq:kmixdis}
\left\{
\begin{array}{ccccll}
(\mathcal{CE}(\uphi{}_h),\mathcal{E}(\upsi)) & &(\upsi{}_h,\nabla y_h) &+(\rot\,\upsi{}_h,p_h) & = \langle\uf,\upsi{}_h\rangle,
\\
&&-(\nabla\mu_h,\nabla y_h)&& =-\langle g,\mu_h\rangle,
\\
(\uphi{}_h,\nabla z_h)&-(\nabla\omega_h,\nabla z_h) && & = 0,
\\
(\rot\,\uphi{}_h,q_h) &  && & =0.
\end{array}
\right.
\end{equation}

\begin{lemma}\label{lem:convK}
Provided Assumptions {\bf A1} and {\bf A2}, the scheme \eqref{eq:kmixdis} is stable on $Y_h$. Let $(\uphi,\omega,y,p)\in Y$ and $(\uphi{}_h,\omega_h,y_h,p_h)\in Y_h$ be the solutions of  \eqref{eq:mixkp} and \eqref{eq:kmixdis}, respectively. There is a constant $C$, uniform with respect to $t$ and $h$, such that
\begin{multline}
\|\uphi-\uphi{}_h\|_{1,\Omega}+\|\omega-\omega_h\|_{1,\Omega}+\| y-y_h\|_{1,\Omega}+\|p-p_h\|_{0,\Omega}
\\
\leqslant C\inf_{(\upsi{}_h,\mu_h,z_h,q_h)\in Y_h} (\|\uphi-\upsi{}_h\|_{1,\Omega}+\|\omega-\mu_h\|_{1,\Omega}+\| y-z_h\|_{1,\Omega}+\|p-q_h\|_{0,\Omega}).
\end{multline}
\end{lemma}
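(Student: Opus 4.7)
The plan is to apply the Babu\v{s}ka--Brezzi theory for saddle-point problems to \eqref{eq:kmixdis}, from which both the discrete stability and the quasi-optimal error estimate will follow by a standard C\'ea-type argument for conforming mixed methods. Writing
\begin{equation*}
a((\uphi,\omega),(\upsi,\mu)) := (\mathcal{CE}(\uphi),\mathcal{E}(\upsi)), \qquad B((\upsi,\mu),(z,q)) := (\upsi-\nabla\mu,\nabla z)+(\rot\upsi,q),
\end{equation*}
so that \eqref{eq:kmixdis} is the Galerkin restriction of \eqref{eq:mixkp}, continuity of both forms on the relevant discrete spaces is immediate from Cauchy--Schwarz. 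What remains is coercivity on the kernel of $B$ and the discrete inf-sup condition.

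For coercivity, any $(\upsi_h,\mu_h)$ in the kernel of $B$ satisfies $(\upsi_h-\nabla\mu_h,\nabla z_h)=0$ for every $z_h\in H^1_{hC}$. Testing with $z_h=\mu_h\in H^1_{h0}\subset H^1_{hC}$ gives $\|\nabla\mu_h\|_{0,\Omega}^2=(\upsi_h,\nabla\mu_h)$, hence $\|\mu_h\|_{1,\Omega}\lesssim\|\upsi_h\|_{0,\Omega}$. Combined with Korn's inequality and the positivity of $\mathcal{C}$, this yields $a((\upsi_h,\mu_h),(\upsi_h,\mu_h))\gtrsim\|\upsi_h\|_{1,\Omega}^2+\|\mu_h\|_{1,\Omega}^2$ on the kernel.

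The heart of the argument is the discrete inf-sup condition, which I expect to be the main obstacle. Given $(z_h,q_h)\in H^1_{hC}\times L^2_{h0}$, I would split $z_h=z_{h1}+z_{h2}$ with $z_{h1}\in\mathcal{H}^1_{hC}$ and $z_{h2}\in H^1_{h0}$, and mirror the construction in the proof of Lemma \ref{lem:stabmixfem} with the $t$-dependent auxiliary variable $\uzeta_h$ suppressed: (i) by Assumption \textbf{A1}, pick $\upsi{}_h^{(1)}\in\uH{}^1_{h0}$ with $(\rot\upsi{}_h^{(1)},q_h)=\|q_h\|_{0,\Omega}^2$ and $\|\upsi{}_h^{(1)}\|_{1,\Omega}\lesssim\|q_h\|_{0,\Omega}$; (ii) by Lemma \ref{lem:fortinhe}, which uses \textbf{A1} and \textbf{A2}, pick $\upsi{}_h^{(2)}\in\uH{}^1_{h0}$ with $\rot\upsi{}_h^{(2)}$ orthogonal to $L^2_{h0}$, $(\upsi{}_h^{(2)},\nabla z_{h1})\gtrsim\|\nabla z_{h1}\|_{0,\Omega}^2$, and $\|\upsi{}_h^{(2)}\|_{1,\Omega}\lesssim\|\nabla z_{h1}\|_{0,\Omega}$; (iii) set $\upsi_h=\delta\upsi{}_h^{(1)}+\upsi{}_h^{(2)}$ for a sufficiently small $\delta>0$ to be chosen, and pick $\mu_h\in H^1_{h0}$ by $(\nabla\mu_h,\nabla s_h)=(\upsi_h-\nabla z_{h2},\nabla s_h)$ for every $s_h\in H^1_{h0}$. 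Since $(\nabla\mu_h,\nabla z_{h1})=0$ (as $\mu_h\in H^1_{h0}$ and $z_{h1}\in\mathcal{H}^1_{hC}$) and $(\upsi_h-\nabla\mu_h,\nabla z_{h2})=\|\nabla z_{h2}\|_{0,\Omega}^2$ by construction of $\mu_h$, the evaluation $B((\upsi_h,\mu_h),(z_h,q_h))$ is bounded below by $C\|\nabla z_{h1}\|_{0,\Omega}^2+\|\nabla z_{h2}\|_{0,\Omega}^2+\delta\|q_h\|_{0,\Omega}^2$ minus a single cross term controlled by $\delta\|q_h\|_{0,\Omega}\|\nabla z_{h1}\|_{0,\Omega}$, which Young's inequality absorbs once $\delta$ is fixed small enough independent of $h$. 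Together with the obvious bounds $\|\upsi_h\|_{1,\Omega}+\|\mu_h\|_{1,\Omega}\lesssim\|z_h\|_{1,\Omega}+\|q_h\|_{0,\Omega}$, this yields the required inf-sup on $Y_h$.

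With Brezzi's conditions verified, stability of \eqref{eq:kmixdis} on $Y_h$ follows, and the quasi-optimal error estimate is the standard best-approximation bound for conforming Galerkin discretisations of a well-posed mixed problem, the consistency error vanishing because $Y_h\subset Y$. Uniformity of $C$ in $t$ is automatic since $t$ appears nowhere in the Kirchhoff system or its discretisation.
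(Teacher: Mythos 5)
Your proposal is correct and follows essentially the route the paper intends: the paper states Lemma \ref{lem:convK} without proof, leaving it as the $t=0$ specialisation of the Brezzi-condition verification in Lemma \ref{lem:stabmixfem}, which is exactly what you carry out (same splitting $z_h=z_{h1}+z_{h2}$, same use of \textbf{A1} for the $q_h$-part and of Lemma \ref{lem:fortinhe} for the discrete-harmonic part, same discrete Poisson problem for $\mu_h$). If anything, your $\delta$-weighting of $\upsi_h^{(1)}$ handles the cross term $(\upsi_h^{(1)},\nabla z_{h1})$ more carefully than the paper's discrete argument, which passes over it silently.
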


A variant is presented as: find $(\uphi{}_h,\omega_h,y_h,p_h)\in Y_h$, such that, for $(\upsi{}_h,\mu_h,z_h,q_h)\in Y_h$,
\begin{equation}\label{eq:kmixdisint}
\left\{
\begin{array}{ccccll}
(\mathcal{CE}(\uphi{}_h),\mathcal{E}(\upsi)) & &(\Pi_h^\rot\upsi{}_h,\nabla y_h) &+(\rot\,\upsi{}_h,p_h) & = \langle\uf,\upsi{}_h\rangle,
\\
&&-(\nabla\mu_h,\nabla y_h)&& =-\langle g,\mu_h\rangle,
\\
(\Pi_h^\rot\uphi{}_h,\nabla z_h)&-(\nabla\omega_h,\nabla z_h) && & = 0,
\\
(\rot\,\uphi{}_h,q_h) &  && & =0.
\end{array}
\right.
\end{equation}

\begin{lemma}\label{lem:convKint}
Provided Assumptions {\bf A1}, {\bf A2} and {\bf A5}, the scheme \eqref{eq:kmixdisint} is stable on $Y_h$. Let $(\uphi,\omega,y,p)\in Y$ and $(\uphi{}_h,\omega_h,y_h,p_h)\in Y_h$ be the solutions of  \eqref{eq:mixkp} and \eqref{eq:kmixdisint}, respectively. There is a constant $C$, uniform with respect to $t$ and $h$, such that
\begin{multline}
\|\uphi-\uphi{}_h\|_{1,\Omega}+\|\omega-\omega_h\|_{1,\Omega}+\| y-y_h\|_{1,\Omega}+\|p-p_h\|_{0,\Omega} \leqslant C(h(\|\uf\|_{-1,\Omega}+\|g\|_{-1,\Omega})
\\
+\inf_{(\upsi{}_h,\mu_h,z_h,q_h)\in Y_h} (\|\uphi-\upsi{}_h\|_{1,\Omega}+\|\omega-\mu_h\|_{1,\Omega}+\| y-z_h\|_{1,\Omega}+\|p-q_h\|_{0,\Omega})).
\end{multline}
\end{lemma}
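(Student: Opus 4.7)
The plan is to mirror, in the $t=0$ setting, the two-step argument used for the modified Reissner-Mindlin scheme \eqref{eq:rmmixintdis}: first establish Brezzi stability of \eqref{eq:kmixdisint} on $Y_h$, and then apply a Strang-type estimate to absorb the nonconformity caused by inserting $\Pi_h^\rot$ into the off-diagonal blocks.

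For stability I would cast \eqref{eq:kmixdisint} as a saddle-point system with primal form $a((\uphi,\omega),(\upsi,\mu)):=(\mathcal{CE}(\uphi),\mathcal{E}(\upsi))$ and coupling form $b_h((\upsi,\mu),(z,q)):=(\Pi_h^\rot\upsi,\nabla z)-(\nabla\mu,\nabla z)+(\rot\upsi,q)$. Continuity of $a$ is Korn, while continuity of $b_h$ follows from the uniform $L^2$-boundedness $\|\Pi_h^\rot\upsi\|_{0,\Omega}\lesssim\|\upsi\|_{1,\Omega}$ granted by Assumption~{\bf A5}. For the inf-sup condition, given $(z_h,q_h)\in H^1_{hC}\times L^2_{h0}$ I would decompose $z_h=z_{h1}+z_{h2}$ with $z_{h1}\in\mathcal{H}^1_{hC}$, $z_{h2}\in H^1_{h0}$, and build $(\uphi{}_h,\omega_h)\in\uH{}^1_{h0}\times H^1_{h0}$ exactly as in the proof of Lemma~\ref{lem:stabmixfem}: take $\uphi{}_{h1}\in\uH{}^1_{h0}$ with $\rot\uphi{}_{h1}=q_h$ via {\bf A3}, set $\uphi{}_{h2}:=\Pi_h^{\mathsf F}\nabla\Phi$ with $\Phi\in\mathcal{H}^2_C$ and $\Phi|_\Gamma=z_{h1}|_\Gamma$, and define $\omega_h\in H^1_{h0}$ by the discrete Poisson problem $(\nabla\omega_h,\nabla s_h)=(\Pi_h^\rot\uphi{}_h-\nabla z_{h2},\nabla s_h)$ for $s_h\in H^1_{h0}$. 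Lemma~\ref{lem:fortinhepirot} then supplies $(\Pi_h^\rot\uphi{}_h,\nabla z_{h1})\gtrsim\|\nabla z_{h1}\|_{0,\Omega}^2$; the $z_{h2}$-block yields $\|\nabla z_{h2}\|_{0,\Omega}^2$ directly by the definition of $\omega_h$, the $q_h$-block is covered by $\rot\uphi{}_h=q_h$, and the crossed lower-order terms are absorbed by Young's inequality. Coercivity of $a$ on the discrete kernel then follows from Korn plus the kernel identity $(\nabla\omega_h,\nabla s_h)=(\Pi_h^\rot\uphi{}_h,\nabla s_h)$ for all $s_h\in H^1_{h0}$, which controls $\|\omega_h\|_{1,\Omega}$ by $\|\uphi{}_h\|_{1,\Omega}$ via {\bf A5}.

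For the error bound, Proposition~5.5.6 of \cite{Boffi.D;Brezzi.F;Fortin.M2013} gives the best-approximation term in $Y_h$ plus the two consistency suprema
\[
\sup_{\upsi{}_h\in\uH{}^1_{h0}}\frac{(\upsi{}_h-\Pi_h^\rot\upsi{}_h,\nabla y)}{\|\upsi{}_h\|_{1,\Omega}}\quad\mbox{and}\quad\sup_{z_h\in H^1_{h0}}\frac{(\uphi-\Pi_h^\rot\uphi,\nabla z_h)}{\|z_h\|_{1,\Omega}},
\]
each of which is bounded by $Ch\|\nabla y\|_{0,\Omega}$ and $Ch\|\uphi\|_{1,\Omega}$ respectively via {\bf A5}. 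The well-posedness of \eqref{eq:mixkp} converts this into the advertised $Ch(\|\uf\|_{-1,\Omega}+\|g\|_{-1,\Omega})$ term. The hard point, in my view, is coercivity on the discrete kernel: because $\omega$ does not appear in $a$, the control on $\|\omega_h\|_{1,\Omega}$ must be recovered indirectly through the kernel relation above together with the $L^2$-stability of $\Pi_h^\rot$. Everything else reduces to a $t=0$ specialisation of the arguments that already worked for \eqref{eq:rmmixintdis}.
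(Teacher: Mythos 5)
Your proof is correct and follows essentially the route the paper intends: Lemma \ref{lem:convKint} is stated there without proof as the $t=0$ specialisation of the arguments already given for the modified Reissner--Mindlin scheme \eqref{eq:rmmixintdis}, and your two-step argument (Brezzi stability via the same test-function construction as in Lemma \ref{lem:stabmixfem} combined with Lemma \ref{lem:fortinhepirot}, then the Strang-type estimate of Proposition 5.5.6 of \cite{Boffi.D;Brezzi.F;Fortin.M2013} with the two consistency suprema controlled by {\bf A5} and the well-posedness of \eqref{eq:mixkp}) is exactly that specialisation, including the correct handling of coercivity on the discrete kernel through the relation $(\nabla\mu_h,\nabla z_h)=(\Pi_h^\rot\upsi{}_h,\nabla z_h)$. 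One small correction: the element $\uphi{}_{h1}$ should be obtained from the Fortin operator of {\bf A1}, giving $(\rot\uphi{}_{h1},q_h)=\|q_h\|_{0,\Omega}^2$ rather than the exact identity $\rot\uphi{}_{h1}=q_h$, and not from {\bf A3}, which concerns $H_{h0}(\rot)$ and is not among the hypotheses of this lemma.
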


\subsection{An example of finite element space quintuple}

For $K$ a triangle, we use $P_k(K)$ for the set of polynomials on $K$ of degrees not higher than $k$. Denote by $a_i$ and $E_i$ vertices and opposite edges of $K$, $i=1,2,3$.  The barycentre coordinates are denoted as usual by $\lambda_i$, $i=1,2,3$. Besides, define shape function spaces  $P^e(K):={\rm span}\{\lambda_i\lambda_j,1\leqslant i\neq j\leqslant 3\}$ and $\mathbb{E}(K):=\{\uu+v\ux^\perp:\uu\in \mathbb{R}^2,\ v\in\mathbb{R}\}$. Let $\mathcal{G}_h$ be a shape-regular triangular subdivision of $\Omega$, such that $\overline\Omega=\cup_{K\in\mathcal{G}_h}\overline K$. Denote by $\mathcal{E}_h$, $\mathcal{E}_h^i$, $\mathcal{X}_h$ and $\mathcal{X}_h^i$ the set of edges, interior edges, vertices and interior vertices, respectively. For any edge $e\in\mathcal{E}_h$, denote by $\undertilde{t}{}_e$ the unit tangential vector along $e$. Define finite element spaces as
\begin{itemize}
%\item $L_h:=\{w\in H^1(\Omega):w|_K\in P_1(K),\ \forall\,K\in \mathcal{G}_h\}$, and $L_{h0}=L_h\cap H^1_0(\Omega)$;
%
\item $\mathsf{L}_h:=\{w\in H^1(\Omega):w|_K\in P_1(K),\ \forall\,K\in \mathcal{G}_h\}$;

\item $\mathsf{L}_{hC}:=\mathsf{L}_h\cap H^1_C(\Omega)$; $\mathsf{L}_{h0}:=\mathsf{L}_h\cap H^1_0(\Omega)$; $\undertilde{\mathsf{L}}{}_{h}:=(\mathsf{L}_{h})^2$; $\undertilde{\mathsf{L}}{}_{h0}:=(\mathsf{L}_{h0})^2$;

%\item $\mathsf{L}_h^{+e}:=\{w\in H^1(\Omega):w|_K\in P_1^{+e}(K),\ \forall\,K\in \mathcal{G}_h\}$, and $\mathsf{L}_{h0}^{+e}=\mathsf{L}_h^{+e}\cap H^1_0(\Omega)$;

\item $\mathsf{L}_h^{e}:=\{w\in H^1(\Omega):w|_K\in P^{e}(K),\ \forall\,K\in \mathcal{G}_h\}$, and $\mathsf{L}_{h0}^{e}:=\mathsf{L}_h^{e}\cap H^1_0(\Omega)$;

%\item $\mathsf{L}_h^{f}:=\{w\in H^1(\Omega):w|_K\in P^{f}(K),\ \forall\,K\in \mathcal{G}_h\}$, and $\mathsf{L}_{h0}^{f}:=\mathsf{L}_h^{f}\cap H^1_0(\Omega)$;

%\item $\mathsf{L}_h^{+f}:=\{w\in H^1(\Omega):w|_K\in P_1^{+f}(K),\ \forall\,K\in \mathcal{G}_h\}$, and $\mathsf{L}_{h0}^{+f}=\mathsf{L}_h^{+f}\cap H^1_0(\Omega)$;

\item $b_e\in \mathsf{L}^{e}_h$ such that $b_{e}=0$ on $e'\in\mathcal{E}_h\setminus \{e\}$; 

\item $\undertilde{\mathsf{L}}{}_h^e:={\rm span}\{b_e\undertilde{t}{}_e\}_{e\in \mathcal{E}_h}$, and $\undertilde{\mathsf{L}}{}_{h0}^e=\undertilde{\mathsf{L}}{}_h^e\cap \undertilde{H}{}^1_0(\Omega)$;

%\item $\undertilde{\mathsf{L}}{}_{h}^{f}:={\rm span}\{b_f\undertilde{n}{}_f\}_{f\in \mathcal{F}_h}$, and $\undertilde{\mathsf{L}}{}_{h0}^{f}:=\undertilde{\mathsf{L}}{}_{h}^{f}\cap \undertilde{H}{}^1_0(\Omega)$;

\item $\undertilde{\mathsf L}{}_{h0}^{+e}:=\undertilde{\mathsf L}{}_{h0}+\undertilde{\mathsf L}{}_{h0}^e$; 
%$\undertilde{\mathsf L}{}_{h0}^{+f}:=\undertilde{\mathsf L}{}_{h0}+\undertilde{\mathsf L}{}_{h0}^f$;

%\item $\mathbb{N}_h:=\{\uw\in H(\curl,\Omega):\uw|_K\in \mathbb{E}(K),\ \forall\,K\in \mathcal{G}_h\}$, and $\mathbb{N}_{h0}:=\mathbb{N}_h\cap H_0(\curl,\Omega)$;

\item $\mathbb{R}_h:=\{\uw\in H(\rot,\Omega):\uw|_K\in \mathbb{E}(K),\ \forall\,K\in \mathcal{G}_h\}$, and $\mathbb{R}_{h0}:=\mathbb{R}_{h}\cap H_0(\rot,\Omega)$.

\item $\mathcal{L}^0_{h}:=$ space of piecewise constant, and $\mathcal{L}^0_{h0}:=\mathcal{L}^0_{h}\cap L^2_0(\Omega)$.
\end{itemize}

The space $\undertilde{\mathsf L}{}^{+e}_{h0}$ is the rotation of the Bernardi-Raugel element space \cite{Bernardi.C;Raugel.G1985}, and $\mathbb{R}_{h0}$ is the rotated Raviart-Thomas element space \cite{Raviart.P;Thomas.J1977} of lowest order. 

\begin{lemma}
The exact sequence holds that 
\begin{equation}
\begin{array}{ccccccccc}
0 & \xrightarrow{\rm Id} & \mathsf{L}_{hC} & \xrightarrow{\bs{\mrm{grad}}} & \mathbb{R}_{h0} & \xrightarrow{\mrm{rot}} & \mathcal{L}_{h0}^0  & \xrightarrow{\int_\Omega\cdot}& 0.
%\\
% & & {\color{red} \cup} & &  {\color{red} \cup} & &  {\color{red} ||} & & 
% \\
%0 & \longrightarrow & H^2_{0}& \xrightarrow{\bs{\mrm{grad}}} & \bs{H}^1_{0} & \xrightarrow{\mrm{rot}} & L^2_{0}  & \rightarrow & 0  
\end{array}
\end{equation}
\end{lemma}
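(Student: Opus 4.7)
The plan is to verify the complex identity $\rot\circ\nabla=0$ and then check exactness at $\mathsf{L}_{hC}$, $\mathbb{R}_{h0}$, and $\mathcal{L}^0_{h0}$ in turn. The first and last nodes are easy; exactness at $\mathbb{R}_{h0}$ will be a short reduction to the continuous identity $\mathring{H}_0(\rot,\Omega)=\nabla H^1_C(\Omega)$ from Lemma \ref{lem:isorot}(1), exploiting that elements of $\mathbb{R}_{h0}$ with vanishing $\rot$ are piecewise constant; and surjectivity onto $\mathcal{L}^0_{h0}$ will come from a dimension count using Euler's formula. Before doing any of this I first note $\nabla\mathsf{L}_{hC}\subset\mathbb{R}_{h0}$: a piecewise constant gradient lies in each $\mathbb{E}(K)$, tangential continuity is inherited from continuity of $w$, and $\nabla w\cdot\ut{}_e$ vanishes along each $\Gamma_j$ because $w$ is constant there. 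Injectivity of $\nabla$ on $\mathsf{L}_{hC}$ follows from $\nabla w=0$ on connected $\Omega$ together with $w|_{\Gamma_0}=0$, while $\rot\nabla=0$ is trivial.

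For exactness at $\mathbb{R}_{h0}$, I let $\uw\in\mathbb{R}_{h0}$ with $\rot\uw=0$. On each $K$, $\rot(\uu+v\ux^\perp)=2v$, so $v=0$ and $\uw$ is piecewise constant on $\mathcal{G}_h$. As $\uw\in\mathring{H}_0(\rot,\Omega)$, Lemma \ref{lem:isorot}(1) supplies $\tilde w\in H^1_C(\Omega)$ with $\uw=\nabla\tilde w$. Since $\nabla\tilde w$ is constant on each $K$, the restriction $\tilde w|_K$ is affine; a piecewise-affine $H^1$ function is continuous across edges, so $\tilde w\in\mathsf{L}_h$. The boundary conditions $\tilde w|_{\Gamma_0}=0$ and $\tilde w|_{\Gamma_j}=\mathrm{const}$ are built into $H^1_C(\Omega)$, so in fact $\tilde w\in\mathsf{L}_{hC}$, as needed.

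For surjectivity of $\rot$ onto $\mathcal{L}^0_{h0}$, integration by parts gives $\int_\Omega\rot\uw=\oint_\Gamma\uw\cdot\ut=0$, so $\rot\mathbb{R}_{h0}\subset\mathcal{L}^0_{h0}$. Writing $V^i$, $E^i$, $T$ for the numbers of interior vertices, interior edges and triangles of $\mathcal{G}_h$, direct counting yields $\dim\mathsf{L}_{hC}=V^i+J$, $\dim\mathbb{R}_{h0}=E^i$, $\dim\mathcal{L}^0_{h0}=T-1$. Since $V_j=E_j$ along each boundary cycle, Euler's identity $V-E+T=1-J$ collapses to $E^i-V^i-J=T-1$. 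Combined with the just-proved $\dim\ker(\rot|_{\mathbb{R}_{h0}})=\dim\nabla\mathsf{L}_{hC}=V^i+J$, this gives $\dim\rot\mathbb{R}_{h0}=E^i-(V^i+J)=T-1=\dim\mathcal{L}^0_{h0}$, hence equality and surjectivity.

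The main subtlety is pinning down the primitive $\tilde w$ supplied by Lemma \ref{lem:isorot}(1) as a bona fide element of $\mathsf{L}_{hC}$; this is exactly where the discrete structure of $\uw$ (piecewise constant after killing the $v\ux^\perp$ part) and the constants-on-$\Gamma_j$ freedom of $H^1_C$ dovetail. The multiple connectivity of $\Omega$ enters only through this $J$-dimensional cohomology, which appears identically on the continuous side ($H^1_C$ versus $H^1_0$) and on the discrete side ($\mathsf{L}_{hC}$ versus $\mathsf{L}_{h0}$), and it is precisely this matching that makes the dimension count close.
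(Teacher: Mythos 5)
Your proof is correct, and it is a mirror image of the paper's argument rather than a reproduction of it. Both proofs hinge on the same Euler-formula identity $\dim\mathbb{R}_{h0}=\dim\mathsf{L}_{hC}+\dim\mathcal{L}^0_{h0}$ and both verify exactness directly at exactly one interior node, recovering the other node by dimension counting — but you chose opposite nodes. The paper proves surjectivity of $\rot$ onto $\mathcal{L}^0_{h0}$ directly, via the purely discrete jump identity $\int_\Omega\rot\utau{}_h\,q_h=\sum_{e\in\mathcal{E}_h^i}\int_e\utau{}_h\cdot\ut{}_e\,\llbracket q_h\rrbracket|_e$ (so that any $q_h$ orthogonal to $\rot\mathbb{R}_{h0}$ has vanishing jumps, is constant, and hence is zero in $L^2_0$), and then concludes $\nabla\mathsf{L}_{hC}=\ker(\rot|_{\mathbb{R}_{h0}})$ from injectivity of $\nabla$ plus the dimension count. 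You instead characterize the kernel directly — observing that $\rot(\uu+v\ux^\perp)=2v$ forces kernel elements to be piecewise constant, then invoking the continuous identity $\mathring{H}_0(\rot,\Omega)=\nabla H^1_C(\Omega)$ of Lemma \ref{lem:isorot} and checking that the resulting primitive is piecewise affine, continuous, and satisfies the $H^1_C$ boundary conditions, hence lies in $\mathsf{L}_{hC}$ — and deduce surjectivity from the count. Your route buys an explicit discrete primitive (a direct proof of Assumption {\bf A4}) at the cost of importing the continuous Helmholtz-type result; the paper's route is self-contained at the discrete level and its surjectivity computation doubles as the kind of duality estimate used later in the inf-sup verifications. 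Both arguments are complete; your handling of the one genuinely delicate point — that the $H^1_C$ primitive of a piecewise-constant, tangentially continuous field is itself a conforming piecewise-linear function — is sound, since an $H^1$ function that is polynomial on each element must have matching traces across interior edges.
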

\begin{proof}
It is evident that $\rot\mathbb{R}_{h0}\subset \mathcal{L}_{h0}^0$. Given $q_h\in \mathcal{L}_{h0}^0$, we have for $\utau{}_h\in\mathbb{R}_{h0}$ that $\displaystyle\int_\Omega\rot\utau{}_hq_h=\sum_{e\in\mathcal{E}_h^i}\int_e\utau{}_h\cdot\ut{}_e\llbracket q_h\rrbracket|_e.$ Thus, with $\utau{}_h$ such that $\int_e\utau{}_h\cdot\ut{}_e=\llbracket q_h\rrbracket|_e$, we have $\int_\Omega\rot\utau{}_hq_h>0$ for $q_h\not\equiv 0$. This implies $\rot\mathbb{R}_{h0} = \mathcal{L}_{h0}^0$. By Euler's formula for multiply-connected domains, $\dim(\mathbb{R}_{h0})=\dim(\mathsf{L}_{hC})+\dim(\mathcal{L}_{h0}^0)$.  Meanwhile, $\nabla$ maps $\mathsf{L}_{hC}$ into $\mathbb{R}_{h0}$ (specifically the kernel of $\rot$) injectively. It follows that $\nabla\mathsf{L}_{hC}$ is the kernel space of $\rot$ in $\mathbb{R}_{h0}$. The proof is completed. 
\end{proof}

Denote by $\Pi_h^C$ the Clement operator from $\uH{}^1_{0}(\Omega)$ to $\undertilde{\mathsf{L}}{}_{h0}$, and define $\Pi_h^{+e}$ the interpolation from $\uH{}^1_{0}(\Omega)$ to $\undertilde{\mathsf{L}}{}_{h0}^{+e}$ by
\begin{equation}
(\Pi_h^{+e}\uphi)(\ux)=(\Pi_h^C\uphi)(\ux),\ \forall\,\ux\in \mathcal{X}_h,\  \mbox{and}\ \int_e (\Pi_h^{+e}\uphi)\cdot\ut{}_e=\int_e \uphi\cdot\ut{}_e,\ \forall\,e\in\mathcal{E}_h.
\end{equation} 
Then $(\rot \Pi_h^{+e}\uphi,q_h)=(\rot\uphi,q_h)$ for any $q_h\in\mathcal{L}_{h0}^0$, and $|\uphi-\Pi_h^{+e}\uphi|_{k,\Omega}\leqslant Ch^{1-k}|\uphi|_{1,\Omega}$, $k=0,1$.

Let $\Pi_h^\mathbb{R}$ be the nodal interpolator of $\mathbb{R}_{h0}$ defined by, with $\utau$ regular enough,
$$
\int_e\Pi_h^\mathbb{R}\utau\cdot\ut{}_e=\int_e\utau\cdot\utau{}_e.
$$
By standard error estimate (c.f., e.g., Proposition 2.5.4 of \cite{Boffi.D;Brezzi.F;Fortin.M2013}) $\|\utau-\Pi_h^\mathbb{R}\utau\|_{0,\Omega}\leqslant Ch\|\utau\|_{1,\Omega}$.

Now set
$$
\begin{array}{cccc}
\uH{}^1_{h0}=\undertilde{\mathsf L}{}^{+e}_{h0};& H^1_h=\mathsf{L}_h;& H^1_{hC}=\mathsf{L}_{hC};& H^1_{h0}=\mathsf{L}_{h0};
\\
 H_{h0}(\rot)=\mathbb{R}_{h0}; &L^2_{h0}=\mathcal{L}^0_{h0}; & \Pi_h^{\mathsf F}=\Pi_h^{+e}; & \Pi_h^\rot=\Pi_h^\mathbb{R}.
\end{array}
$$
%$$
%\uH{}^1_{h0}=\undertilde{\mathsf L}{}^{+e}_{h0};\ H^1_h=\mathsf{L}_h;\ H^1_{hC}=\mathsf{L}_{hC};\ H^1_{h0}=\mathsf{L}_{h0};\ H_{h0}(\rot)=\mathbb{R}_{h0}; \ L^2_{h0}=\mathcal{L}^0_{h0}; \ \Pi_h^{\mathsf F}=\Pi_h^{+e}; \ \Pi_h^\rot=\Pi_h^\mathbb{R}.
%$$
Then Assumptions {\bf A1} through {\bf A5} are all satisfied. 

\begin{remark}
With the finite element spaces defined above, the scheme \eqref{eq:DLform} coincides with the one proposed by Dur\'an-Liberman \cite{Duran.R;Liberman.E1992}. 
\end{remark}

\section{Concluding remarks}
\label{sec:conc}

In this paper, the Reissner-Mindlin thick plate model and the Kirchhoff thin plate model are studied on multiply-connected polygonal domains. Equivalent mixed formulations of the plate models are presented, and uniform stability analysis of the mixed systems is given. A framework is designed for discretizing the mixed formulations, with uniform stability constructed analogously under some conditions. The error estimation in energy norm is constructed with respect to the assumption of the regularity of the solution. An example is given to verify the framework. For schemes that fall in the framework, an optimal diagonal preconditioner can be constructed by virtue of \cite{Ruesten.T;Winther.R1992,Hiptmair.R;Xu.J2007}.

The mixed system suggested for the Reissner-Mindlin plate involve five variables; restricted on simply-connected domain, the quintuple of spaces is the same as the quintuple used in \cite{Brezzi.F;Fortin.M;Stenberg.R1991}. Due to the harmonic function existing in the Helmholtz decomposition of $H^{-1}(\dv,\Omega)$, big systems constructed on the quintuple on multiply-connected domain can not generally be decomposed to small subsystems. This way, the system constructed on the space quintuple has generally to be studied as an entire one, and an approach different from that of \cite{Brezzi.F;Fortin.M1986,Brezzi.F;Fortin.M;Stenberg.R1991,Arnold.D;Falk.R1989} is utilised. 

A stable regular decomposition $H^1_0(\rot,\Omega)=\nabla H^1_0(\Omega)+\uH{}^1_0(\Omega)$ on multiply-connected polygon is constructed, and its discretised analogue can indeed be constructed. This can be viewed as another main ingredient of the paper. Other regular decompositions on regular domains in two and three dimensional can be investigated the similar way on domains not that regular with applications in designing discretization schemes and optimal preconditioners. This will be discussed in future.  We refer to \cite{Zhang.S2016fw} for some related discussion.

%\begin{remark}
As an example of the framework, the Dur\'an-Liberman's scheme \cite{Duran.R;Liberman.E1992} for Reissner-Mindlin plate originally developed on simply-connected domains is extended to multiply-connected domains. It will be naturally expected that some other existing schemes for simply-connected domain could be extended to multiply-connected domains by the aid of the new approach. Some conforming or nonconforming schemes in MITC type, like ones in, e.g., \cite{Ming.P;Shi.Z2001,Hu.J;Ming.P;Shi.Z2003,Bathe.K;Dvorkin.E1985}, can be studied in future.

In this paper, the error estimation in energy norm is given with respect to the regularity of the solution. For Kirchhoff model, a regularity of the system may be obtained by the aid of regularity theory of Poisson and Stokes systems on corner domain. Meanwhile, for Reissner-Mindlin plate, concise regularity analysis of the solution will be in need for a robust error estimation, especially estimation in low-order norms. The regularity analysis is well interacted with the asymptotic analysis between thin and moderately thick plates, but due to the limited regularity of the domain considered, the techniques for regularity analysis and asymptotic analysis of the solution in \cite{Brezzi.F;Fortin.M1986,Arnold.D;Falk.R1997,Pitkaranta.J1988,Arnold.D;Falk.R1989} can hardly be directly repeated. These will have to be discussed in future in an integrated way, and perhaps firstly for non convex simply-connected polygons.

\section*{Acknowledgment}
The author is supported partially by the National Natural Science Foundation of China with Grant No. 11471026 and National Centre for Mathematics and Interdisciplinary Sciences, Chinese Academy of Sciences.

\end{document}